\def\seq{\subseteq}
\def\inv{^{\text{-}1}}
\newcommand{\nv}{\text{-}}
\def\Th{\operatorname{Th}}
\newcommand{\abar}{\bar{a}}
\newcommand{\cbar}{\bar{c}}
\renewcommand{\hbar}{\bar{h}}
\newcommand{\mbar}{\bar{m}}
\newcommand{\nbar}{\bar{n}}
\newcommand{\rbar}{\bar{r}}
\newcommand{\ubar}{\bar{u}}
\newcommand{\vbar}{\bar{v}}
\newcommand{\xbar}{\bar{x}}
\newcommand{\ybar}{\bar{y}}
\newcommand{\ku}{\mathfrak{u}}
\newcommand{\ms}{\mathfrak{s}}
\newcommand{\kn}{\mathfrak{n}}
\newcommand{\de}{\mathbin{\dot{=}}}
\def\mod{\operatorname{mod}}
\newcommand{\mand}{\makebox[.4in]{and}}
\newcommand{\noit}[1]{\noindent\textit{#1}}
\newcommand{\claim}{\hfill$\dashv_{\text{\scriptsize{claim}}}$}
\newcommand{\cA}{\mathcal{A}}
\newcommand{\cB}{\mathcal{B}}
\newcommand{\cC}{\mathcal{C}}
\newcommand{\cL}{\mathcal{L}}
\newcommand{\cM}{\mathcal{M}}
\newcommand{\cN}{\mathcal{N}}
\newcommand{\cP}{\mathcal{P}}
\newcommand{\cQ}{\mathcal{Q}}
\newcommand{\cR}{\mathcal{R}}
\newcommand{\cZ}{\mathcal{Z}}
\def\C{\mathbb C}
\def\N{\mathbb N}
\def\P{\mathbb P}
\def\Q{\mathbb Q}
\def\R{\mathbb R}
\def\Z{\mathbb Z}
\newtheorem{theorem}{Theorem}[section]
\newtheorem{lemma}[theorem]{Lemma}
\newtheorem{corollary}[theorem]{Corollary}
\newtheorem{proposition}[theorem]{Proposition}
\newtheorem{fact}[theorem]{Fact}
\theoremstyle{definition}
\newtheorem{definition}[theorem]{Definition}
\newtheorem{example}[theorem]{Example}
\newtheorem{remark}[theorem]{Remark}
\newtheorem{question}[theorem]{Question}
\newtheorem{notation}[theorem]{Notation}
\title[Multiplicative structure in stable expansions of $(\Z,+,0)$]{Multiplicative structure in stable expansions of the group of integers}
\author{Gabriel Conant}
\address{Department of Mathematics\\
University of Notre Dame\\
Notre Dame, IN, 46656, USA}
\email{gconant@nd.edu}
\begin{document}

\begin{abstract}
We define two families of expansions of $(\Z,+)$ by unary predicates, and prove that their theories are superstable of $U$-rank $\omega$. The first family consists of expansions $(\Z,+,A)$, where $A$ is an infinite subset of a finitely generated multiplicative submonoid of $\N$. Using this result, we also prove stability for the expansion of $(\Z,+)$ by all unary predicates of the form $\{q^n:n\in\N\}$ for some $q\in\N_{\geq 2}$. The second family consists of sets $A\seq\N$ which grow asymptotically close to a $\Q$-linearly independent increasing sequence $(\lambda_n)_{n=0}^\infty\seq\R^+$ such that $\{\frac{\lambda_n}{\lambda_m}:m\leq n\}$ is closed and discrete.
\end{abstract}

\subjclass[2010]{Primary: 03C45, 03C60, 11N25; Secondary: 11U09}

\maketitle

\section{Introduction}\label{sec:intro}
\setcounter{theorem}{0}
\numberwithin{theorem}{section}

A common theme in model theory is to fix a mathematical structure, whose definable sets satisfy certain tameness properties, and study expansions of that structure by new definable sets which preserve tameness. Perhaps the most widely known example of this kind of program is the study of \emph{o-minimal} expansions of the real ordered field $\cR=(\R,+,\cdot,<)$, where a particularly celebrated result, due to Wilkie \cite{Wilkie}, is that the expansion of $\cR$ by the exponential function remains o-minimal.  The study of o-minimal expansions of $\cR$ has been used in applications to Diophantine geometry including a new proof of the Manin-Mumford conjecture, and cases of the Andr\'{e}-Oort conjecture (further details can be found in \cite{ScanAO}). Another example is the study of \emph{stable} expansions of the complex field $\cC=(\C,+,\cdot)$. In \cite{BeZi}, Belegradek and Zilber study expansions of $\cC$ by finitely generated multiplicative subgroups of the unit circle. Expansions of $\cC$ by arbitrary finitely generated multiplicative subgroups are analyzed by Van den Dries and G\"{u}nayd{\i}n in \cite{vdDGu}. This work relates to study of finitely generated subgroups of $A(\C)$, where $A$ is a semiabelian variety over $\C$. In model-theoretic language, the  Mordell-Lang conjecture, proved by Faltings, is that the induced structure on any such subgroup is stable and $1$-based (further details can be found in \cite{PiLC}).

In this article, we focus on expansions of the group of integers $\cZ=(\Z,+)$. The group $\cZ$ is a well-known example of a superstable group, whose behavior has motivated the rich model theoretic study of modules and $1$-based (or weakly normal) stable groups, going back to the 1980s (see \cite{HrPi1B}, \cite{prestbook}). On the other hand, when compared to the breadth and depth of results on $\cR$ and $\cC$, much less is known about tame expansions of $\cZ$. The most well understood example of a \emph{proper} expansion of $\cZ$ is the ordered group $(\Z,+,<)$, often called \emph{Presburger arithmetic}. While $(\cZ,+,<)$ is not o-minimal, it does satisfy other model theoretic notions of ``minimality" such as \emph{quasi-o-minimality} and \emph{dp-minimality}. However, results by several authors (e.g. \cite{ADHMS1}, \cite{BPW}) have shown that $(\Z,+,<)$ has no proper expansions satisfying these tameness properties (or even several weaker properties, e.g. \cite{DoGo}). Thus, for expansions of $(\Z,+,<)$, the best possible hope for a nontrivial research program would seem to be in the setting of NIP theories.  For instance, in \cite{PointPA}, Point obtains striking quantifier elimination results for expansions of $(\Z,+,<)$ by various \emph{sparse} sequences, which allows one to conclude the expansions are NIP \cite{ADHMS2}. 

Being a totally ordered structure, $(\Z,+,<)$ is not stable. A surprising fact is that, after its canonization in the 1980's as a foundational example of a stable group, there was no known example of a proper stable expansion of $\cZ$. This remained the case until 2014, when Palac\'{i}n-Sklinos \cite{PaSk} and Poizat \cite{PoZ} independently gave the first examples, which included the expansions $(\Z,+,q^{\N})$ where $q\in\N_{\geq 2}$ and $q^{\N}=\{q^n:n\in\N\}$. The examples in \cite{PaSk} and \cite{PoZ} were generalized by the author \cite{CoSS} to the class of \emph{geometrically sparse} subsets of $\Z$, which is a robust class of sets whose growth rate is ``at least geometric" in a fairly strong sense (see Definition \ref{def:gs}).  Independently of \cite{CoSS}, the examples from \cite{PaSk} and \cite{PoZ} were also generalized by Lambotte and Point \cite{PoLa}, who prove stability for certain families of expansions of $\cZ$ overlapping nontrivially with those studied in \cite{CoSS}. In many of these examples of proper stable expansions of $\cZ$ by a single subset $A\seq\Z$, the set $A$ is inherently \emph{multiplicative}  in the sense that its asymptotic behavior is dominated by the powers of a fixed real number $\lambda>1$. A specific example is the Fibonacci sequence, $(F_n)_{n=0}^\infty$, which satisfies $|F_n-\frac{1}{\sqrt{5}}\phi^n|\leq 1$ for all $n\in\N$, where $\phi$ is the golden ratio. 

In this article, we generalize this multiplicative nature of proper stable expansions of $\cZ$ in two broad ways. First, we view the original examples of Palac\'{i}n-Sklinos and Poizat as expansions of $\cZ$ by cyclic multiplicative submonoids of $\Z^+$. Our first result is the following generalization.

\newtheorem*{thm:CMS}{Theorem \ref{thm:CMS}}
\begin{thm:CMS}
Let $\Gamma$ be a finitely generated multiplicative submonoid of $\Z^+$. If $A\seq \Gamma$ is infinite then $(\Z,+,A)$ is superstable of $U$-rank $\omega$.
\end{thm:CMS}

So, one the one hand, we find a commonality to the study of stable expansions of $\cC=(\C,+,\cdot)$ and the work of Van den Dries and G\"{u}nayd{\i}n \cite{vdDGu} mentioned above. Indeed, the main non-model-theoretic tool in the proof of Theorem \ref{thm:CMS} is a result from algebraic number theory, due to Evertse, Schlickewei, and Schmidt \cite{ESS}, which gives bounds on the number non-degenerate solutions to linear equations in finitely generated multiplicative subgroup of $(\C^*,\cdot)$ (see Fact \ref{fact:ESS}). There are many results of this kind, going back to Schmidt's Subspace Theorem \cite{SchST}, and this behavior in multiplicative groups has many names (e.g. the \emph{Mann property} or  \emph{Mordell-Lang property} in \cite{vdDGu}; and \emph{Lang type} in \cite{PiLC}). On the other hand, there is a stark difference in Theorem \ref{thm:CMS} in that a stable expansion can be obtained using \emph{any arbitary subset} of $\Gamma$. This is not the case in expansions of $\cC$ since, for example, $(\C,+,\cdot,2^{\Z})$ is stable by \cite{vdDGu}, while $(\C+,\cdot,2^{\N})$ is unstable since the ordering on $2^{\N}$ is definable. Stability for the expansion of $\cZ$ by arbitrary subsets of $\Gamma$ also allows us to obtain new examples of stable expansions of $\cZ$ by many unary predicates.

\newtheorem*{thm:MUP}{Theorem \ref{thm:MUP}}
\begin{thm:MUP}
For any integers $q_1,\ldots,q_d\geq 2$, $(\Z,+,q_1^{\N},\ldots,q_d^{\N})$ is superstable of $U$-rank $\omega$. Therefore $(\Z,+,(q^{\N})_{q\geq 2})$ is stable.
\end{thm:MUP}

Theorem \ref{thm:CMS} yields another new phenomenon in stable expansions of $\cZ$. In particular, call a set $A\seq\Z^+$ \emph{lacunary} if $\limsup_{n\to\infty}\frac{a_{n+1}}{a_n}>1$, where $(a_n)_{n=0}^\infty$ is an increasing enumeration of $A$. The sets considered in \cite{CoSS}, \cite{PoLa}, \cite{PaSk}, and \cite{PoZ} are all lacunary (although there are examples where the $\limsup$ is as close to $1$ as desired). So this motivates the question, asked in \cite{CoSS} and \cite{PoLa}, of whether there is a stable expansion of $\cZ$ by a non-lacunary subset of $\Z^+$. Theorem \ref{thm:CMS} produces such examples since, by Furstenburg \cite{FurstLac}, a finitely generated submonoid $\Gamma$ of $\Z^+$ is non-lacunary whenever there are $a,b\in\Gamma$ such that $\log_b a$ is irrational. 

On the other hand, all known \emph{unstable} expansions of the form $(\Z,+,A)$, with $A\seq\Z^+$, satisfy $\liminf_{n\to\infty}\frac{a_{n+1}}{a_n}=1$ (e.g. the primes or perfect squares). In Theorem \ref{thm:badex}, we give an example of an unstable expansion $(\Z,+,A)$ such that $\lim_{n\to\infty}\frac{a_{n+1}}{a_n}$ exists and is strictly greater than $1$ (in particular $A_q:=\{q^n+n:n\in\N\}$ where $q\geq 2$). This example provides new information about the asymptotic behavior of sets $A\seq\Z^+$, which produce stable expansions of $\cZ$. In \cite{CoSS}, this is formulated using the notion of a \emph{geometric sequence}, which we define to be a strictly increasing sequence $(\lambda_n)_{n=0}^\infty$ in $\R^+$ such that $\{\frac{\lambda_m}{\lambda_n}:n\leq m\}$ is closed and discrete (e.g. $\lambda_n=b^n$ for some $b\in\R^{>1}$). The main result of \cite{CoSS} is that, for $A=(a_n)_{n=0}^\infty\seq\Z$, if $|a_n-\lambda_n|$ is $O(1)$ for some geometric sequence $(\lambda_n)_{n=0}^\infty$, then $(\Z,+,A)$ is stable. While this notion of a geometric sequence is a robust way to describe the nature of stable expansions of $\cZ$, the examples in \cite{PoLa} show that $O(1)$ can be relaxed in some cases. On the other hand, the example in Theorem \ref{thm:badex} shows that $O(1)$ cannot even be weakened to $O(n)$ in general. Thus, our last main result is that $O(1)$ can be weakened substantially if we impose further restrictions on the geometric sequence.

\newtheorem*{thm:IS}{Theorem \ref{thm:IS}}
\begin{thm:IS}
Fix $B=(b_n)_{n=0}^\infty\seq\Z$ and a $\Q$-linearly independent geometric sequence $(\lambda_n)_{n=0}^\infty$ such that $|b_n-\lambda_n|$ is $o(\lambda_n)$. For any finite $F\seq\Z$ and infinite $A\seq B+F$, $(\Z,+,A)$ is superstable of $U$-rank $\omega$.
\end{thm:IS}

A concrete family of examples covered by this theorem, which are straightforward but still illustrate the flexibility of the statement, is as follows. Fix algebraically independent reals $\tau_1,\ldots,\tau_k>1$ and construct a sequence $(\lambda_n)_{n=0}^\infty$ by setting $\lambda_0=\tau_1$ and $\lambda_{n+1}=c_n\tau_{i_n}\lambda_n$, where $c_n\in\Z^+$ and $i_n\in\{1,\ldots,k\}$ are arbitrary. Finally, pick $g\colon \N\to\R^+$ such that $g(n)$ is $o(\tau^n)$. Then the set $B=\{\llbracket\lambda_n+g(n)\rrbracket:n\in\N\}$ satisfies the assumptions of Theorem \ref{thm:IS} (where $\llbracket\cdot\rrbracket$ denotes integer part).  This theorem also generalizes a result of \cite{PoLa}, which covers the case when $A=B$ is eventually periodic modulo any fixed $m>0$, and $\lambda_n=\alpha\tau^n$ for some real number $\alpha>0$ and transcendental $\tau>1$.

To prove the theorems above, we use the same strategy from \cite{CoSS}, \cite{PoLa}, and \cite{PaSk} for showing that an expansion of $(\Z,+)$ by some fixed unary predicate is stable. Loosely speaking, we apply work of Casanovas and Ziegler \cite{CaZi} to show that stability of the expansion reduces to stability of the induced structure on the new predicate, and then we show that this induced structure is interpretable in a more familiar stable structure (see, e.g., Theorem \ref{thm:FG0}). This general strategy is explored in \cite{CoSS} in the setting of expansions of $U$-rank $1$ structures (e.g. $(\Z,+)$), and by quoting the work done there we will circumvent most model-theoretic considerations in the proofs (see Section \ref{sec:background} and, especially, Corollary \ref{cor:mainstab}).

\subsection*{Acknowledgements} I would like to thank John Baldwin, Fran\c{c}oise Point, and Pierre Simon for fruitful discussion; and also the referee for remarks and corrections.  Special thanks to Chris Laskowski for several helpful suggestions and comments on an earlier draft.

\section{Induced structure and stability}\label{sec:background}

In this section, we summarize the strategy (briefly outlined in Section \ref{sec:intro}) for proving stability of structures of the form $(\Z,+,A)$, where $A\seq\Z$. For background on basic first-order logic, including the notions of a first-order structure and definable sets in such a structure, we refer the reader to \cite{Mabook}.  For background on stability in first-order model theory, see \cite{Pibook}. For the reader unfamiliar with this topic, we briefly, but emphatically, say that stability is one of the first and most important notions of ``tameness" in first-order structures, and the properties found in stable structures have motivated most of modern research in the field of model theory. For instance, in stable structures, one finds a kind of paradise of good behavior, including a canonical notion of independence and dimension for definable sets, as well as any hope of classifying elementarily equivalent structures in higher cardinalities. 

\begin{definition}
Given a first order $\cL$-structure $\cM$, with universe $M$, let $\cL^{\cM}$ be the relational language consisting of, for any $n\geq 1$, an $n$-ary relation $R_X(\xbar)$ for every $\cM$-definable $X\seq M^n$. Given $A\seq M$, the \textbf{$\cM$-induced structure on $A$}, denoted $A^{\cM}$ is $\cL^{\cM}$-structure with universe $A$ such that, for each $\cM$-definable $X\seq M^n$, the relation $R_X(\xbar)$ is interpreted in $A^{\cM}$ as $A^n\cap X$. 
\end{definition}

Next, we set notation for expansions of $\cZ=(\Z,+,0)$. 

\begin{definition}
Given a sequence $(A_i)_{i\in I}$ of subsets of $\Z$, let $\cZ(A_i)_{i\in I}$ denote the expansion of $(\Z,+)$ obtained by adding a unary predicate for each $A_i$.
\end{definition}

\begin{remark}\label{rem:Zdef}
We will deal with many structures of the form $A^{\cZ}$, where $A\seq\Z$. Thus we recall that $X\seq\Z^n$ is $\cZ$-definable  if and only if it is in the Boolean algebra generated by cosets of subgroups of $\Z^n$. This follows from Presburger's work on $\Th(\cZ)$ (see, e.g., \cite[Lemma 1.9 \& Fact 1.10]{IbKiTa}). 
\end{remark}

\begin{definition}\label{def:SS}\cite{CoSS}
Fix $A\seq\Z$.
\begin{enumerate}
\item Given $n\geq 0$, set $\Sigma_n(A)=\{a_1+\ldots+a_k:k\leq n,~a_1,\ldots,a_k\in A\}$.
\item $A$ is \textbf{sufficiently sparse} if, for all $n\geq 0$, $\Sigma_n(\pm A)$ does not contain a nontrivial subgroup of $\Z$ (where $\pm A:=\{x\in\Z:|x|\in A\}$).
\end{enumerate}
\end{definition}

For sufficiently sparse sets, stability of $\cZ(A)$ is intimately tied to stability of $A^{\cZ}$, as detailed by the following fact. This result, which relies heavily on \cite{CaZi}, is proved by adapting techniques in \cite[Section 2]{PaSk}, which considers the case $A=q^{\N}$. The full proof is given in \cite[Theorems 2.11 \& 4.5]{CoSS}. 

\begin{fact}\label{fact:CZstable}
Suppose $A\seq\Z$ is sufficiently sparse. Then $\cZ(A)$ is stable if and only if $A^{\cZ}$ is stable. Moreover, $U(\cZ(A))\leq U(A^{\cZ})\cdot\omega$.
\end{fact}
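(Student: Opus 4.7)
The plan is to handle both directions through the Casanovas--Ziegler framework \cite{CaZi} for stability of expansions by a unary predicate.

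For the \emph{only if} direction, I would argue formally: if $\cZ(A)$ is stable, then $A$ is $\cZ(A)$-definable via its predicate, so the induced structure $A^{\cZ(A)}$ is stable by the general fact that the induced structure on a definable subset of a stable structure is stable (any order formula on $A$ in the induced language pulls back to an order formula in $\cZ(A)$). Because every $\cZ$-definable relation is also $\cZ(A)$-definable, $A^{\cZ}$ is a reduct of $A^{\cZ(A)}$, hence stable.

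For the \emph{if} direction, the strategy is to apply Casanovas--Ziegler to reduce stability of $\cZ(A)$ to two conditions: (i) stability of $A^{\cZ}$, which is hypothesized, and (ii) a smallness/boundedness condition on the predicate $A$ inside $\cZ$. The main obstacle is verifying (ii), and this is precisely where sufficient sparsity enters. By Remark \ref{rem:Zdef}, $\cZ$-definable sets are Boolean combinations of cosets of subgroups, so for any finite tuple $\bar{a}\seq A$, every $\cZ$-definable set involving $\bar{a}$ is controlled by $\Z$-linear combinations of the entries of $\bar{a}$ together with residue information modulo some $n$. The relevant linear combinations lie in $\Sigma_m(\pm A)$ for an appropriate $m$ depending on the formula and tuple length. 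Sufficient sparsity, which asserts precisely that $\Sigma_m(\pm A)$ contains no nontrivial subgroup of $\Z$, rules out $\cZ$-definable data coming from $\bar{a}$ accidentally capturing infinitely many elements of $A$ in a single coset. This yields the required bound on the number of $\cL^{\cZ}$-types over finite $A$-parameters realized inside $A$, verifying the Casanovas--Ziegler smallness hypothesis and hence stability of $\cZ(A)$.

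For the $U$-rank inequality $U(\cZ(A))\leq U(A^{\cZ})\cdot\omega$, I would analyze a type $p$ in $\cZ(A)$ by splitting off its trace on the $A$-coordinates, whose rank is at most $U(A^{\cZ})$ by stability of the induced structure, from the $\cZ$-type of the remaining coordinates over $A$-parameters, whose contribution is bounded by $\omega$ since $\cZ$ has $U$-rank $1$ and the only additional forking arises from interaction with the predicate. Applying the Lascar inequalities to this splitting, compounded through the Casanovas--Ziegler decomposition, yields the stated product bound.
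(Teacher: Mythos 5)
The paper does not actually prove this statement: it is quoted as a Fact from \cite[Theorems 2.11 \& 4.5]{CoSS}, where the author develops a general Casanovas--Ziegler framework for expansions of $U$-rank-$1$ structures by predicates. So there is no in-text proof to compare yours against, but your outline does follow the same high-level route. Two of your steps are nonetheless off in ways that matter.

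First, you conflate the Casanovas--Ziegler smallness condition with stability of the induced structure. You write that sufficient sparsity ``yields the required bound on the number of $\cL^{\cZ}$-types over finite $A$-parameters realized inside $A$'' and call this the smallness hypothesis. But a bound on such types \emph{is} stability of $A^{\cZ}$, which is your hypothesis (i) and is assumed, not something to be derived from sparsity. The smallness/boundedness hypothesis (ii) in Casanovas--Ziegler is a different and more delicate condition: roughly, it concerns types over $A$ of elements of the ambient model (including elements \emph{outside} $A$), and it asks that $(\cZ,A)$ extend to a $|T|^+$-saturated $L$-elementary extension with $A$ unchanged, equivalently that every $L_P$-formula be equivalent modulo $\Th(\cZ,A)$ to a bounded one (with quantifiers restricted to the predicate). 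In \cite{CoSS} the passage from sufficient sparsity to this boundedness condition goes through the coset description of $\cZ$-definable sets together with the Nash--Nathanson density criterion (see Proposition \ref{prop:ISss} in the present paper for the reformulation $d_*(\Sigma_n(\pm A)\cap\Z^+)=0$), and the upshot is about which new definable subsets of $\Z$ the predicate can introduce, not a count of types realized inside $A$. Your intuition that ``$\Sigma_m(\pm A)$ contains no nontrivial subgroup controls the coset information coming from $\bar{a}$'' points in the right direction, but the conclusion you need to draw from it is boundedness of $L_P$-formulas, and that derivation is where the real work lies.

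Second, the $U$-rank inequality $U(\cZ(A))\leq U(A^{\cZ})\cdot\omega$ does not follow from ``splitting off the $A$-trace'' and citing Lascar inequalities. The Lascar inequalities give additive (Cantor-sum) bounds on $U$-rank of concatenated tuples, not the multiplicative $\cdot\,\omega$ form, and they presuppose a clean decomposition of a type in $\cZ(A)$ into an $A$-part and a $\cZ$-part which itself has to be justified by the boundedness property. The bound in \cite[Theorem 2.11]{CoSS} comes from a rank analysis that propagates boundedness through the forking calculus; merely observing that $\cZ$ has $U$-rank $1$ and invoking Lascar does not produce the stated ordinal product.
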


In the above statement, the $U$-rank of a structure is ordinal valued and the expression $U(A^{\cZ})\cdot\omega$ refers to standard multiplication of ordinals. We will not need to deal directly with the definition of $U$-rank in this paper. For the reader unfamiliar with $U$-rank, we recall that in a stable first-order structure $\cM$, the geometry of definable sets is controlled by an abstract notion of independence called ``nonforking", which generalizes linear independence in vector spaces and algebraic independence in fields. One way to measure the complexity of nonforking independence in stable structures is with a rank function on types called $U$-rank; and a structure is \emph{superstable} when the rank of types in one variable is uniformly bounded by an ordinal. 
Fact \ref{fact:CZstable} provides an upper bound on the $U$-rank of $\cZ(A)$ in terms of the $U$-rank of $A^{\cZ}$. For the sets $A$ considered here, the $U$-rank of $A^{\cZ}$ will always be $1$ (see Proposition \ref{prop:monstab}), and so this will yield an upper bound of $\omega$ for the $U$-rank of the structures $\cZ(A)$ considered in this paper. To see that the rank is exactly $\omega$, we will apply the following result of Palac\'{i}n and Sklinos \cite{PaSk}. 

\begin{fact}\label{fact:PSexp}
$(\Z,+)$ has no proper stable expansions of finite $U$-rank.
\end{fact}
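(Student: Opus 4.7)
Suppose for contradiction that $\cM$ is a proper stable expansion of $\cZ$ of finite $U$-rank $n$. My approach is to exploit the rigidity of $\cZ$---namely its $1$-basedness, the classification of definable subgroups of $\Z^k$, and the $\cZ$-definability of each multiplication-by-$m$ endomorphism---to force every $\cM$-definable set to already be $\cZ$-definable, contradicting properness.

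Working in a sufficiently saturated elementary extension $\cM^* \succeq \cM$ with underlying group $\Z^*$, I would first show that every $\cM^*$-definable subgroup of $(\Z^*)^k$ is already $\cZ$-definable. Finite $U$-rank ensures that strictly descending chains of such subgroups terminate; combining this with Baldwin-Saxl for stable groups and the Smith-normal-form description of $\cZ$-definable subgroups of $\Z^k$ should rule out any new definable subgroups appearing in $\cM^*$. In particular, the connected-component-style subgroup $\bigcap_{m\geq 1}m\Z^*$ is the same type-definable set whether computed in $\cZ$ or in $\cM^*$.

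The critical step is to upgrade this to show $\cM$ is $1$-based, i.e.\ every $\cM$-definable set is a Boolean combination of cosets of $\cM$-definable subgroups; combined with the previous step, this would finish the argument. The plan: suppose $A\seq \Z$ is $\cM$-definable but not $\cZ$-definable, and use the $\cZ$-definable endomorphisms to form the infinite $\cM$-definable family $A_m := \{x : mx \in A\}$ for $m\geq 1$. The target is to show that these sets partition realizations of the $\cZ$-generic type over $\emptyset$ into arbitrarily many $\cM$-types, each of $U$-rank $\geq 1$, so that one can stack generic realizations to produce a type of $U$-rank exceeding $n$---contradicting the finite rank hypothesis.

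The main obstacle is this final splitting argument. Stable expansions of $1$-based structures are not $1$-based in general (as witnessed by the infinite-rank examples of \cite{CoSS}), so the argument must crucially use both the finite-rank hypothesis and structural features specific to $\cZ$, notably the triviality of the pregeometry on its generic types and the fact that its ring of definable endomorphisms is exactly $\Z$. Turning the informal intuition ``a genuinely new $\cM$-definable set forces unboundedly many new extensions of the $\cZ$-generic type'' into a rigorous rank/weight computation against the bound $n$ is the principal technical challenge, and I would expect it to be where the specific structure of $\cZ$ has to be leveraged most carefully.
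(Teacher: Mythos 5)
This statement is not proved in the paper: it is cited as a result of Palac\'{i}n and Sklinos \cite{PaSk}, so there is no ``paper's own proof'' to compare against beyond the reference. That said, your proposal does not constitute a proof either --- it is an outline with the central step explicitly left open, which you yourself flag as ``the principal technical challenge.'' The high-level architecture (show that every $\cM$-definable subgroup of $(\Z^*)^k$ is already $\cZ$-definable, then show $\cM$ is $1$-based so that Hrushovski--Pillay reduces everything to cosets of such subgroups) is a reasonable and, I believe, roughly correct shape for the argument, but a proof lives or dies on the step you have not carried out: deriving $1$-basedness, or equivalently showing that any genuinely new definable set forces $U$-rank beyond every finite bound.

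Moreover, the specific mechanism you propose for that step is not obviously viable. You suggest that from a new set $A$ one forms the family $A_m = \{x : mx \in A\}$ and shows it splits the $\cZ$-generic into arbitrarily many extensions. But consider $A = 2^{\N}$ (which does give a proper stable expansion, of rank $\omega$, so it is the kind of set the argument needs to ``see''). Here $A_{2^k} = 2^{\N}$ for every $k \geq 0$ and $A_m = \emptyset$ for every $m$ with an odd prime factor, so the family $(A_m)_{m \geq 1}$ collapses to just $\{2^\N, \emptyset\}$ and produces no splitting at all. So even granting the informal intuition, this particular family is not where the rank increase is hiding, and the rank/weight computation you defer is not a routine completion --- it is the entire content of the theorem. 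As it stands, the proposal is a plan with an acknowledged hole at its center, not a proof.
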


We will apply this fact several times in conclude that certain expansions of $\cZ$ are proper expansions. In each case, the expansion in question will be proper because of the following observation, which is immediate from Remark \ref{rem:Zdef}. 

\begin{proposition}
Suppose $A\seq\Z$ is infinite and either bounded below or bounded above. Then $A$ is not definable in $\cZ$.
\end{proposition}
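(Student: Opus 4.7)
The plan is to apply Remark \ref{rem:Zdef} and reduce to a normal form argument for Presburger-definable subsets of $\Z$. By that remark, any $\cZ$-definable $X \seq \Z$ lies in the Boolean algebra generated by cosets of subgroups of $\Z$. Since every subgroup of $\Z$ is either trivial or of the form $m\Z$ for some $m \geq 1$, the generating cosets are exactly the singletons $\{a\}$ and the arithmetic progressions $m\Z + r$. So assume, for contradiction, that $A$ is $\cZ$-definable and write it as a finite Boolean combination of such cosets.

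Next I would pass to a common modulus. Let $m$ be the least common multiple of the moduli of all nontrivial arithmetic progressions appearing in the Boolean combination (if no such progressions appear, set $m=1$ so that $m\Z = \Z$). Every such progression $k\Z + r$ with $k \mid m$ is a finite union of residue classes mod $m$, and finite Boolean combinations of unions of residue classes mod $m$ are again unions of residue classes mod $m$. The singleton cosets contribute only finitely many points when combined with this. Therefore $A$ has the form
\[
A \;=\; \Big(\bigcup_{r \in S}(m\Z + r)\Big) \triangle F
\]
for some $S \seq \{0,1,\ldots,m-1\}$ and some finite $F \seq \Z$.

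Now I would finish with the size and boundedness dichotomy. If $S = \emptyset$, then $A \seq F$ is finite, contradicting infinitude of $A$. Hence $S \neq \emptyset$; pick any $r \in S$. Then $A$ contains $(m\Z + r) \setminus F$, which is cofinite in the full arithmetic progression $m\Z + r$ and is therefore unbounded both above and below. This contradicts the hypothesis that $A$ is bounded above or bounded below, completing the proof.

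There is no genuine obstacle here: the whole content is the normal form for $\cZ$-definable unary sets (essentially quantifier elimination for Presburger without order), and once that is invoked the boundedness hypothesis immediately forces finiteness.
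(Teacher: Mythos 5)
Your proof is correct and takes the same route the paper intends: the paper simply asserts the proposition is "immediate from Remark \ref{rem:Zdef}," and your argument just fills in the details of that normal-form reduction. The normal form $A = U \triangle F$ (with $U$ a union of residue classes mod $m$ and $F$ finite) is valid, and the boundedness dichotomy then delivers the contradiction exactly as you say.
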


We now set some terminology concerning expansions and reducts requiring a change of universe. Throughout the paper, \emph{definable} always means with parameters.

\begin{definition}
Let $\cM$ and $\cN$ be structures with universes $M$ and $N$, respectively. Then $\cM$ is a \textbf{virtual reduct} of $\cN$ if there is a bijection $f\colon M\to N$ such that the image under $f$ of any $\cM$-definable set is $\cN$-definable. In this case, $\cN$ is a \textbf{virtual expansion} of $\cM$. If $\cM$ is a virtual reduct of $\cN$ via $f$, and $\cN$ is a virtual reduct of $\cM$ via $f\inv$, then $\cM$ and $\cN$ are \textbf{interdefinable via $f$}. When $M=N$ and $f$ is the identity, we omit ``virtual", ``virtually", and ``via $f$" from the previous notions. 
\end{definition}

Throughout the  the paper, we will use the following notation.

\begin{notation}
Given an integer $n\geq 1$, let $\equiv_n$ denote the binary relation on integers given by equivalence modulo $n$, and let $[n]:=\{1,\ldots,n\}$. Given complex numbers  $z_1,\ldots,z_n,w\in\C$, we write $z_1+\ldots+z_n\de w$ if $z_1+\ldots+z_n=w$ and $\sum_{i\in I}z_i\neq 0$ for all proper nonempty $I\seq[n]$.  
\end{notation}

For the subsets $A\seq\N$ shown to be stable in \cite{CoSS}, \cite{PoLa}, and \cite{PaSk}, most of the difficulty in analyzing $A^{\cZ}$ comes from the induced structure from linear equations. 

\begin{definition}
Given $A\seq\Z$, let $A^{\cZ}_0$ be the reduct of $A^{\cZ}$ to relations of the form $\{\abar\in A^k: c_1a_1+\ldots+c_ka_k\de r\}$ where $k\geq 1$, $r\in\Z$, and $\cbar\in\{\nv 1,1\}^k$. 
\end{definition}

\begin{proposition}\label{prop:cong}
Given $A\seq\Z$, $A^{\cZ}$ is interdefinable with the expansion of $A^{\cZ}_0$ by unary predicates of the form $A\cap(n\Z+r)$ for $0\leq r<n$.
\end{proposition}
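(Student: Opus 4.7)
The plan is to prove both inclusions. In one direction, I would observe that the proposed expansion is a reduct of $A^{\cZ}$: by definition $A^{\cZ}_0$ is a reduct of $A^{\cZ}$, and each $A\cap(n\Z+r)$ is the restriction to $A$ of the $\cZ$-definable coset $n\Z+r$, so it is already a unary predicate of $A^{\cZ}$. The substantive direction is that every relation of $A^{\cZ}$ is definable from $A^{\cZ}_0$ together with the added congruence predicates.

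For that direction, fix a relation on $A^n$ coming from a $\cZ$-definable $X\seq\Z^n$. By Remark \ref{rem:Zdef}, $X$ is a Boolean combination of cosets of subgroups of $\Z^n$, and each such coset can be written as a conjunction of atomic conditions $L(\xbar)=s$ or $L(\xbar)\equiv_m s$, with $L$ an integer linear form and $s\in\Z$. Since definability is closed under Boolean operations, it suffices to handle each atomic condition on $A^n$. The congruence condition $\sum_i c_i a_i \equiv_m s$ is equivalent to a finite disjunction, over residue tuples $(r_1,\ldots,r_n)\in\{0,\ldots,m-1\}^n$ with $\sum_i c_i r_i \equiv_m s$, of conjunctions $\bigwedge_i \bigl(a_i\in A\cap(m\Z+r_i)\bigr)$, which uses exactly the new unary predicates.

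The equation case $\sum_i c_i a_i = s$ is the main work. I would first rewrite this as $\sum_{j=1}^m \epsilon_j b_j = s$ by replacing each $c_i a_i$ with $|c_i|$ copies of $\pm a_i$ (sign $\mathrm{sgn}(c_i)$), so that $\epsilon_j\in\{-1,1\}$ and each $b_j\in\{a_1,\ldots,a_n\}$. The key combinatorial lemma, proved by induction on $m$, is that the equation holds if and only if there exist a partition $\{B_1,\ldots,B_k\}$ of $[m]$ and integers $s_1,\ldots,s_k$ with $\sum_\ell s_\ell = s$ and at most one $s_\ell$ nonzero, such that $\sum_{j\in B_\ell}\epsilon_j b_j \de s_\ell$ for every $\ell$. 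The inductive step is: if the full sum is already $\de s$, use the trivial partition; otherwise find a proper nonempty $I\seq[m]$ with $\sum_{j\in I}\epsilon_j b_j=0$ and recurse on $I$ (with target $0$, which forces all labels in that part to be $0$) and on $[m]\setminus I$ (with target $s$). This translates the equation into a finite disjunction, indexed by partitions of $[m]$ together with a choice of distinguished block carrying the label $s$, of conjunctions of $A^{\cZ}_0$-relations applied to the block tuples $(b_j)_{j\in B_\ell}$.

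The main obstacle I anticipate is making this combinatorial lemma precise and confirming its finite-disjunction conclusion, since \emph{a priori} one might fear that infinitely many integer labels could appear. The ``at most one nonzero label'' refinement eliminates this concern, as the labels are then determined up to the choice of set partition and of a distinguished block; assembling the Boolean closure, the congruence reduction, and this equation decomposition yields the required defining formula in the expanded structure.
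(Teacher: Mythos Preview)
Your proposal is correct and follows essentially the same approach as the paper: both reduce via Remark~\ref{rem:Zdef} to atomic linear equalities and congruences, handle the congruences with the new unary predicates, reduce to $\pm 1$ coefficients, and treat the equality case by an induction that splits off zero-sum subtuples. The only difference is presentational: the paper writes the single-step recursion $X=\{\bar a:\sum c_ia_i\de r\}\cup\bigcup_{\emptyset\neq I\subsetneq[k]}\{\bar a:\sum_{i\in I}c_ia_i=0\wedge\sum_{i\notin I}c_ia_i=r\}$ and appeals to induction on $k$, whereas you describe the fully unwound partition form and explicitly observe that at most one block carries a nonzero label, which is a nice way to make the finiteness of the resulting disjunction transparent.
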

\begin{proof}
Most of this is implicit in \cite{PaSk}, \cite{PoLa}, and \cite{CoSS}, and so we only sketch the argument. Let $\cA$ be the expansion of $A^{\cZ}_0$ by unary predicates for sets of the form $A\cap (n\Z+r)$, for $0\leq r<n$. By the characterization of definable sets in $\cZ$ from Remark \ref{rem:Zdef}, we can describe $A^{\cZ}$ as the structure with universe $A$ and relations for sets of the form $A^k\cap X$, where $X\seq\Z^k$ is of one of the following forms:
\begin{enumerate}[$(i)$]
\item $X=\{\abar\in A^k:c_1a_1+\ldots+c_ka_k=r\}$ where $k\geq 1$, $r\in\Z$, and $\cbar\in\Z^k$; or
\item $X=\{\abar\in A^k:c_1a_1+\ldots+c_ka_k\equiv_n r\}$ where $k\geq 1$, $0\leq r<n$, and $\cbar\in\Z^k$.
\end{enumerate}
We want to show any such set is definable in $\cA$. Clearly (since we have equality in the language), we may assume $\cbar\in\{\nv 1,1\}^k$ (this reduction is mainly done for convenience later). It is also straightforward to show that any set $X$ of type $(ii)$ is definable from the unary predicates $A\cap (n\Z+r)$ for $0\leq r<n$ (see also \cite[Proposition 5.2]{CoSS}). So suppose $X$ is type $(i)$. Then 
\begin{multline*}
X=\{\abar\in A^k:c_1a_1+\ldots+c_ka_k\de r\}\\
\cup\bigcup_{\emptyset\neq I\subsetneq [k]}\left\{\abar\in A^k:\sum_{i\in I}c_ia_i=0\text{ and } \sum_{i\in [k]\backslash I} c_ia_i=r\right\}.
\end{multline*}
Working by induction on $k$, we see that $X$ is definable in $\cA$.
\end{proof}

For us, the key point of the previous proposition is that, given $A\seq\Z$, $A^{\cZ}$ is an expansion of $A^{\cZ}_0$ by \emph{unary} predicates. In general, expanding a structure by unary predicates can certainly affect stability of that structure (e.g. the expansion of $\cZ$ by $\N$ is unstable). However, for the sets $A\seq\Z$ considered here, we will show that the induced structure $A^{\cZ}_0$ has the property that \emph{any} expansion of $A^{\cZ}_0$ by unary predicates is stable of $U$-rank $1$. This will be extremely useful, as it will allow us to entirely ignore the extra induced structure in $A^{\cZ}$ from predicates of the form $A\cap(n\Z+r)$ for $0\leq r<n$. Even more, this will allow us to conclude that, for any $B\seq A$, $B^{\cZ}$ is also stable. This is formalized by Proposition \ref{prop:IRsub} below.

\begin{definition}
A structure $\cM$ is \emph{monadically stable of finite $U$-rank} if the expansion of $\cM$ by unary predicates for all subsets of $M$ is stable of finite $U$-rank.
\end{definition}

\begin{proposition}\label{prop:IRsub}
Given $A\seq\Z$, if $A^{\cZ}_0$ is a virtual reduct of a structure, which is monadically stable of finite $U$-rank, then, for any $B\seq A$, $B^{\cZ}$ is monadically stable of finite $U$-rank. 
\end{proposition}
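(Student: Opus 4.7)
The plan is to reduce the statement to $B^{\cZ}_0$ via Proposition \ref{prop:cong}, transfer stability from the given ambient structure to an appropriate expansion of $A^{\cZ}_0$ along the virtual reduct, and then pass from $A^{\cZ}_0$ down to $B^{\cZ}_0$ via the induced structure on the (now-definable) set $B$.

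Fix $B\seq A$. To show that $B^{\cZ}$ is monadically stable of finite $U$-rank I must prove that for every family $(C_i)_{i\in I}$ of subsets of $B$, the expansion of $B^{\cZ}$ by $(C_i)_{i\in I}$ is stable of finite $U$-rank. By Proposition \ref{prop:cong}, this expansion is interdefinable with the expansion of $B^{\cZ}_0$ by $(C_i)_{i\in I}$ together with the unary predicates $B\cap(n\Z+r)$ for $0\leq r<n$. Thus it is enough to show that \emph{any} expansion of $B^{\cZ}_0$ by unary predicates on $B$ is stable of finite $U$-rank.

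Let $\cN$ be the given monadically stable structure of finite $U$-rank, with $A^{\cZ}_0$ a virtual reduct of $\cN$ via a bijection $f\colon A\to N$, and let $(C_i)_{i\in I}$ be any family of subsets of $B$. Form the expansion $\cA^+$ of $A^{\cZ}_0$ by the unary predicate $B$ together with the $C_i$'s (each viewed as a subset of $A$). Transporting all these predicates across $f$, we see that $\cA^+$ is a virtual reduct of the expansion of $\cN$ by the unary predicates $f(B)$ and $f(C_i)$ for $i\in I$; monadic stability of finite $U$-rank for $\cN$ makes this expansion of $\cN$ stable of finite $U$-rank, and since virtual reducts preserve stability and do not increase $U$-rank, the same holds for $\cA^+$.

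Since $B$ is $\emptyset$-definable in $\cA^+$, the $\cA^+$-induced structure on $B$ is stable of finite $U$-rank by standard properties of induced structure on definable sets in stable theories. Because $B\seq A$, for every basic relation $R_X$ of $A^{\cZ}_0$, where $X=\{\abar\in\Z^k:c_1a_1+\ldots+c_ka_k\de r\}$, we have $B^k\cap R_X^{\cA^+}=B^k\cap X$, which is precisely the interpretation of the same relation in $B^{\cZ}_0$; and each $C_i$ appears as a unary relation of this induced structure. Hence the expansion of $B^{\cZ}_0$ by $(C_i)_{i\in I}$ is a reduct of the $\cA^+$-induced structure on $B$, and therefore stable of finite $U$-rank, as required. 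The main obstacle is essentially bookkeeping across the three universes $A$, $N$, and $B$; the non-trivial model-theoretic inputs are the two standard transfer facts that virtual reducts preserve stability of finite $U$-rank, and that induced structure on a $\emptyset$-definable set in a stable theory of finite $U$-rank is again stable of finite $U$-rank.
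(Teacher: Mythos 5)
Your proof is correct and follows the same strategy as the paper's: expand $A^{\cZ}_0$ by unary predicates so that $B$ becomes definable, transfer stability of finite $U$-rank from the ambient structure across the virtual reduct, then pass to the induced structure on $B$ and invoke Proposition \ref{prop:cong}. The paper simply shortcuts the bookkeeping by expanding once by all unary predicates of $A$ at the start, whereas you work with an arbitrary family $(C_i)_{i\in I}$; the underlying argument is the same.
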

\begin{proof}
Fix $A$ as in the statement, and $B\seq A$. Let $\cA$ be the expansion of $A^{\cZ}_0$ by unary predicates for all subsets of $A$, and let $\cB$ be the same for $B$. Then $\cA$ is stable of finite $U$-rank, $B$ is definable in $\cA$, and $\cB$ is the $\cA$-induced structure on $B$. By Proposition \ref{prop:cong}, $B^{\cZ}$ is a reduct of $\cB$.
\end{proof}

Altogether, this gives us the main conclusion of this section. 

\begin{corollary}\label{cor:mainstab}
Suppose $A\seq\Z$ is sufficiently sparse and $A^{\cZ}_0$ is a virtual reduct of a structure, which is monadically stable of finite $U$-rank. For any $B\seq A$, if $B$ is not definable in $(\Z,+)$ then $\cZ(B)$ is superstable of $U$-rank $\omega$.
\end{corollary}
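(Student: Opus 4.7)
The plan is to assemble this directly from the machinery already developed in the section, since Fact \ref{fact:CZstable}, Fact \ref{fact:PSexp}, and Proposition \ref{prop:IRsub} together essentially handle the three ingredients: stability, the bound on $U$-rank, and the lower bound forcing rank $\omega$.

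First I would observe that $B$ inherits sufficient sparsity from $A$. This is immediate from Definition \ref{def:SS}: if $B\seq A$, then $\pm B\seq\pm A$, so $\Sigma_n(\pm B)\seq\Sigma_n(\pm A)$, and the latter contains no nontrivial subgroup of $\Z$. Next, I would apply Proposition \ref{prop:IRsub} to the given $A$ to conclude that $B^{\cZ}$ is monadically stable of finite $U$-rank; in particular, $B^{\cZ}$ is stable, and $U(B^{\cZ})=n$ for some finite $n$. Combining sufficient sparsity of $B$ with stability of $B^{\cZ}$, Fact \ref{fact:CZstable} gives that $\cZ(B)$ is stable, together with the upper bound
\[
U(\cZ(B))\leq U(B^{\cZ})\cdot\omega=n\cdot\omega=\omega.
\]

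For the matching lower bound, I would use the hypothesis that $B$ is not definable in $(\Z,+)$: this makes $\cZ(B)$ a proper expansion of $\cZ$, and since it is stable, Fact \ref{fact:PSexp} forbids $\cZ(B)$ from having finite $U$-rank. Hence $U(\cZ(B))\geq\omega$, and equality with the ordinal $\omega$ follows; in particular $\cZ(B)$ is superstable. There is no real obstacle here — the work is all front-loaded into Proposition \ref{prop:IRsub} and Fact \ref{fact:CZstable} — so the only thing to be careful about is the passage from ``$B^{\cZ}$ has finite $U$-rank'' to the ordinal multiplication in the rank bound, and the appeal to Fact \ref{fact:PSexp} to upgrade an \emph{a priori} finite possibility to exactly $\omega$.
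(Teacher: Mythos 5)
Your proof is correct and follows exactly the same route as the paper's (which simply says to combine Fact \ref{fact:CZstable}, Fact \ref{fact:PSexp}, and Proposition \ref{prop:IRsub} with the observation that subsets of sufficiently sparse sets are sufficiently sparse). The only point worth making explicit is that $U(B^{\cZ})\geq 1$ (since $B$ is infinite, being undefinable in $(\Z,+)$), so that the ordinal product $U(B^{\cZ})\cdot\omega$ really is $\omega$ and not $0$.
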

\begin{proof}
Combine Fact \ref{fact:CZstable}, Fact \ref{fact:PSexp}, and Proposition \ref{prop:IRsub}, along with the observation that subsets of sufficiently sparse sets are sufficiently sparse.
\end{proof}

\section{Multiplicatively generated sets}\label{sec:MGG}

The goal of this section is our first main theorem.

\begin{theorem}\label{thm:CMS}
Suppose $\Gamma$ is a finitely generated multiplicative submonoid of $\Z^+$, and $A\seq\Gamma$ is infinite. Then $\cZ(A)$ is superstable of $U$-rank $\omega$.
\end{theorem}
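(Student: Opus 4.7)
My plan is to invoke Corollary~\ref{cor:mainstab}, taking its ``$A$'' to be $\Gamma$ and its ``$B$'' to be our $A$. Three conditions must be checked: (i) $\Gamma$ is sufficiently sparse; (ii) $\Gamma^{\cZ}_0$ is a virtual reduct of a monadically stable structure of finite $U$-rank; and (iii) $A$ is not $\cZ$-definable. Condition (iii) is immediate from the proposition following Fact~\ref{fact:PSexp}, since $A\seq\Z^+$ is infinite and bounded below. For (i), let $p_1,\ldots,p_s$ be the primes dividing some generator of $\Gamma$, so that $\Gamma\seq\{p_1^{e_1}\cdots p_s^{e_s}:\ebar\in\N^s\}$. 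Then $|\Gamma\cap[1,N]|=O((\log N)^s)$, giving $\Gamma$ density zero in $\Z$, and the same estimate yields $|\Sigma_n(\pm\Gamma)\cap[-nN,nN]|=O((\log N)^{sn})$, still density zero, for each fixed $n$. Since any subset of $\Z$ containing a nontrivial subgroup $m\Z$ has density at least $1/m>0$, this forces $\Gamma$ (and hence $A$) to be sufficiently sparse.

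The real work is (ii). Identify $\Gamma$ with its image $\tilde\Gamma\seq\N^s$ under the exponent map $a\mapsto(e_1(a),\ldots,e_s(a))$, and let $\Gamma_\Q\seq\Q^*$ be the multiplicative group generated by $\Gamma$, which corresponds under the same map to $\Z^s$. Apply the Evertse--Schlickewei--Schmidt theorem (Fact~\ref{fact:ESS}) to each defining relation ``$c_1a_1+\cdots+c_ka_k\de r$'' of $\Gamma^{\cZ}_0$. The conclusion is: when $r\neq 0$, the non-degenerate solutions in $\Gamma^k$ form a finite set; when $r=0$, they form a finite union of ``diagonal orbits'' of the form $\{(gb_1,\ldots,gb_k):g\in\Gamma_\Q,\ gb_i\in\Gamma\}$, for $\bbar$ ranging over a finite subset of $\Gamma_\Q^k$. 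In exponent coordinates, each orbit becomes $\{(v+u_1,\ldots,v+u_k):v\in\Z^s,\ v+u_i\in\tilde\Gamma\}$ for a fixed $\ubar\in(\Z^s)^k$. I would then introduce a structure $\cM^*$ on $\tilde\Gamma$ whose primitive relations are exactly these finite sets and diagonal-offset families, and verify that $\Gamma^{\cZ}_0$ is a virtual reduct of $\cM^*$ via the exponent map.

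The hard part will be showing $\cM^*$ is monadically stable of finite $U$-rank. Although each ESS configuration is individually almost trivial (a finite set or a one-parameter diagonal orbit), the collection of them is infinite and a priori unbounded in arity. My intended approach is to argue that $\cM^*$ is mutually algebraic in the sense of Laskowski: the ESS classification forces each tuple $\abar\in\Gamma^k$ to lie in only finitely many orbits for each $(k,\cbar)$, and each orbit pins $\abar$ down up to a single multiplicative parameter $g$, so every primitive relation has bounded ``vertical fiber'' above a single coordinate. Mutual algebraicity implies monadically NFCP, hence monadic stability, with finite $U$-rank bounded in terms of $s$ since $\tilde\Gamma\seq\N^s$ has fixed coordinate dimension. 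Once this is in place, Corollary~\ref{cor:mainstab} immediately yields that $\cZ(A)$ is superstable of $U$-rank $\omega$.
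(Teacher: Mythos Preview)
There is a genuine gap in your argument for (i). The claim that ``the same estimate yields $|\Sigma_n(\pm\Gamma)\cap[-nN,nN]|=O((\log N)^{sn})$'' is unjustified: an element of $\Sigma_n(\pm\Gamma)\cap[1,N]$ can arise as $\pm a_1\pm\cdots\pm a_n$ with the $a_i\in\Gamma$ far exceeding $N$, via cancellation. For instance, with $\Gamma=\langle 2,3\rangle$ and $n=2$, a difference $2^a3^b-2^c3^d\in[1,N]$ places no a priori bound on $a,b,c,d$ in terms of $N$, since there are infinitely many ratios $2^u3^v\in(1,2)$. Controlling such representations is exactly what the Evertse--Schlickewei--Schmidt theorem provides; it does not follow from the elementary count $|\Gamma\cap[1,N]|=O((\log N)^s)$. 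The paper bypasses density entirely: Fact~\ref{fact:ESS} gives $\Gamma$ the weighted sum property, and a Van der Waerden argument (Lemma~\ref{lem:VDW}) then shows that $\Sigma_n(\pm\Gamma)$ contains no arbitrarily long arithmetic progression, hence no nontrivial subgroup of $\Z$ (Corollary~\ref{cor:mgss}). So ESS is needed for (i) as well as for (ii).

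Your plan for (ii) is correct in outline and matches the paper's use of ESS in Theorem~\ref{thm:FG0}$(a)$, but the paper streamlines it considerably. Rather than introducing a structure $\cM^*$ with infinitely many primitive relations and arguing mutual algebraicity from scratch, the paper observes that every diagonal orbit $\{(v+u_1,\ldots,v+u_k):v\in\N^s\}$ is already definable in $\cN_{s,\ms}=(\N^s,\ms_1,\ldots,\ms_s)$ using only the $s$ coordinate successors. Hence $\Gamma(p_1,\ldots,p_s)^{\cZ}_0$ is \emph{interdefinable} with $\cN_{s,\ms}$ via the exponent map, and monadic stability of $U$-rank $1$ then comes for free from Laskowski's characterisation of mutually algebraic theories (Proposition~\ref{prop:monstab}). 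Your direct mutual-algebraicity argument for $\cM^*$ should also go through, but the identification with $\cN_{s,\ms}$ is shorter and gives a precise description of the induced structure. Note too that it is cleaner to take the Corollary's ``$A$'' to be $\Gamma(p_1,\ldots,p_s)$ rather than $\Gamma$ itself, so that $\tilde\Gamma=\N^s$; this is the paper's opening reduction.
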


For $A=q^{\N}$, stability of $\cZ(A)$ was first shown by Palac\'{i}n-Sklinos \cite{PaSk} and Poizat \cite{PoZ}. As $2^{\N}$ is geometrically sparse (see Definition \ref{def:gs}), the generalization to subsets $A\seq 2^{\N}$ follows from \cite{CoSS}. We will prove Theorem \ref{thm:CMS} using the strategy described in Section \ref{sec:background}. Specifically, to prove some expansion $\cZ(A)$ is stable, we first show that $A$ is sufficiently sparse, and then we interpret the $\cZ$-induced structure on $A$.

\subsection{Multiplicatively generated sets are sufficiently sparse}

\begin{definition}
A subset $\Gamma\seq\C^*$ has the \textbf{weighted sum property} if for all $k\geq 1$ there is some $n\geq 0$ such that, for any $r\in\Z^+$, the equation $x_1+\ldots+x_k\de r$ has at most $n$ solutions in $(\pm \Gamma)^k$. 
\end{definition}

Observe that the notation $\Sigma_n(A)$ generalizes directly to sets $A\seq \C$. The proof of the next lemma, which is essentially an application of Van der Waerden's Theorem, is a generalization of \cite[Lemma 4]{JaNa} (although the proof is more or less the same). 

\begin{lemma}\label{lem:VDW}
If $\Gamma\seq\C^*$ has the weighted sum property then for all $n\geq 1$, $\Sigma_n(\pm \Gamma)$ does not contain arbitrarily large arithmetic progressions whose common difference is a positive integer.
\end{lemma}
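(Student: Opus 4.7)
The plan is to argue by induction on $n$. The base case $n = 0$ is immediate since $\Sigma_0(\pm\Gamma) = \{0\}$, which contains no AP of length $\geq 2$. For the inductive step, I assume the claim for $n - 1$ and suppose, for contradiction, that for every $L$ there is an AP of length $L$ in $\Sigma_n(\pm\Gamma)$ with positive integer common difference.

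The first move is a Van der Waerden reduction. Each element of the AP admits a non-degenerate representation (obtained by collapsing zero sub-sums) as a sum of some $k \leq n$ elements of $\pm\Gamma$. Coloring the indices of the AP by the minimum such $k$ uses at most $n+1$ colors, and a Van der Waerden application extracts, for $L$ large, a monochromatic sub-AP $\{a' + jt : 0 \leq j < \ell\}$ of any desired length $\ell$, still with $t \in \Z^+$, on which every element has a non-degenerate representation of the same length $k$. If $k < n$, the sub-AP lies in $\Sigma_{n-1}(\pm\Gamma)$ and the inductive hypothesis gives a contradiction for $\ell$ large. So I may assume $k = n$, and fix for each $j$ a non-degenerate representation $\xbar_j = (x_{j,1}, \ldots, x_{j,n}) \in (\pm\Gamma)^n$ with $x_{j,1} + \ldots + x_{j,n} \de a' + jt$.

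Next, I form ``cross'' tuples $\zbar_j = (\xbar_j, -\xbar_{j-1}) \in (\pm\Gamma)^{2n}$ for $1 \leq j < \ell$, whose entries sum to $t$. The key device is to choose, for each $j$, a subset $I^*_j \seq [2n]$ that is minimal under inclusion subject to $\sum_{i \in I^*_j}(\zbar_j)_i = t$. Minimality forces $\zbar_j|_{I^*_j}$ to be a non-degenerate solution of $\sum_i y_i \de t$: if a proper nonempty $J \subsetneq I^*_j$ had zero subsum, its complement in $I^*_j$ would be strictly smaller and still sum to $t$, contradicting minimality. By the weighted sum property applied for each $s \in \{1, \ldots, 2n\}$, the number of non-degenerate solutions of $\sum_{i=1}^s y_i \de t$ in $(\pm\Gamma)^s$ is uniformly bounded, so the pair $(I^*_j, \zbar_j|_{I^*_j})$ takes only finitely many values.

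A second Van der Waerden application then yields, for $\ell$ large, a sub-sub-AP of the $j$-indices of any desired length $\ell'$ (with positive integer common difference) on which this pair is constant, say equal to $(I^*, \wbar)$. Let $A = I^* \cap [n]$ and $B = (I^* \cap \{n+1, \ldots, 2n\}) - n$; at least one of $A, B$ is nonempty since $I^* \neq \emptyset$. Assuming $A \neq \emptyset$ (the case $B \neq \emptyset$ is symmetric after shifting $j \mapsto j - 1$), the coordinates $(x_{j,i})_{i \in A}$ equal $(w_i)_{i \in A}$ on the sub-sub-AP, so with $c = \sum_{i \in A} w_i$ the remaining coordinates witness $a' + jt - c \in \Sigma_{n - |A|}(\pm\Gamma) \seq \Sigma_{n-1}(\pm\Gamma)$. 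The values $\{a' + jt - c\}$ over the sub-sub-AP form an AP of length $\ell'$ with positive integer common difference contained in $\Sigma_{n-1}(\pm\Gamma)$, contradicting the inductive hypothesis for $\ell'$ large. The main obstacle is that the cross sums $\zbar_j$ need not themselves be non-degenerate, so the weighted sum property cannot be applied to them directly; the minimal-$I^*$ trick is the essential workaround, converting each $\zbar_j$ into a genuine non-degenerate solution of a fixed equation with right-hand side $t$, after which Van der Waerden and the weighted sum property conspire to freeze coordinates of $\xbar_j$ and drop the problem into $\Sigma_{n-1}(\pm\Gamma)$.
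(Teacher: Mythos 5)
Your proof is correct, and the core mechanism is the same as the paper's: form cross--tuples of consecutive AP terms, observe that each sums to the positive common difference, extract a non-degenerate solution of the equation with that difference on the right, bound its possible values using the weighted sum property, apply Van der Waerden to freeze something, and remove it to land in $\Sigma_{n-1}(\pm\Gamma)$ and invoke the inductive hypothesis. The execution, however, is heavier than the paper's in two places. First, the paper skips your initial Van der Waerden step entirely: it just takes any representation $\sigma_t\in(\pm\Gamma)^{\leq n}$ of each term, without normalizing all lengths to exactly $n$, which is never needed for the reduction. Second, rather than using a second Van der Waerden application to freeze the entire pair $(I^*,\wbar)$, the paper observes that it suffices to freeze a \emph{single} coordinate of the non-degenerate subtuple: it colors each index $t$ by the pair consisting of the first entry of $\mu_t$ (which lies in the finite set $X(d)$, bounded via the weighted sum property) and a one-bit indicator of which half of the cross--tuple that entry came from. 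One Van der Waerden application then produces a sub-AP of length $m+1$ on which a fixed $x\in\pm\Gamma$ appears in every $\sigma_t$, and subtracting $x$ already drops into $\Sigma_{n-1}(\pm\Gamma)$. Your version buys nothing extra; the paper's single-step, single-coordinate coloring does the same job more economically. Your minimal-$I^*$ device is also unnecessary in the paper's organization, since choosing any non-degenerate subtuple summing to $d$ (which exists by repeatedly collapsing zero subsums) suffices; you only need minimality because you want to freeze the whole subtuple and hence need its identity, not merely one of its entries, to be well-controlled.
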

\begin{proof}
 Given $k\geq 1$ and $r>0$, let $\Gamma(k,r)\seq (\pm \Gamma)^k$ be the set of solutions to $x_1+\ldots+x_k\de r$. We prove the claim by induction on $n$. Suppose $n=1$. For any arithmetic progression $(z+td)_{t=0}^k$ in $\Sigma_1(\pm\Gamma)=\pm\Gamma$, with $d\in\Z^+$, the tuple $(z+td,\nv(z+(t-1)d))$ is in $\Gamma(2,d)$ for all but at most two indices $t\in [k]$. Therefore $\pm \Gamma$ does not contain arbitrarily large arithmetic progressions by assumption. Fix $n\geq 2$ and suppose we have the result for $n-1$. Let $m$ be a bound on the size of an arithmetic progression in $\Sigma_{n-1}(\pm \Gamma)$. Given $r\in\Z^+$, let $X(r)$ be the set of elements of $\pm\Gamma$, which appear in an element of $\Gamma(p,r)$ for some $p\leq 2n$. By assumption, there is an integer $c\geq 1$ such that $|X(r)|\leq c$ for all $r\in\Z^+$. 
 
 Recall that, given integers $u,v\geq 1$, the Van der Waerden number $W(u,v)$ is the smallest integer $N$ such that any coloring of $[N]$ by $u$ colors contains a monochromatic arithmetic progression of length $v$. Let $k=W(2c,m+1)$, and suppose $(z+td)_{t=0}^k$ is an arithmetic progression in $\Sigma_n(\pm \Gamma)$, with $d\in\Z^+$. For each $0\leq t\leq k$, fix $\sigma_t\in (\pm \Gamma)^{\leq n}$ such that $\sum \sigma_t=z+td$. For $1\leq t\leq k$ we have,
\[
d=z+td-(z+(t-1)d)=\sum \sigma_t+\sum\nv\sigma_{t-1},
\]
and thus we may find a subtuple $\mu_t$ of $(\sigma_t,\nv\sigma_{t-1})$ such that $\mu_t\in \Gamma(p,d)$ for some $1\leq p\leq 2n$. By construction, $\mu_t\in X(d)^{\leq 2n}$. Let $C=X(d)\times\{0,1\}$ and let $f\colon [k]\to C$ such that $f(t)_1$ is the first coordinate of $\mu_t$ and $f(t)_2=0$ if and only if $f(t)_1\in\sigma_t$. By choice of $k$, there is $(x,\epsilon)\in C$ and an arithmetic progression $P\seq[k]$ of length $m+1$ such that $f(t)=(x,\epsilon)$ for all $t\in P$. After exchanging $x$ with $\nv x$ and $P$ with $P-1$ if necessary, we obtain $x\in\pm \Gamma$ and an arithmetic progression $P\seq\{0,1,\ldots,k\}$ of length $m+1$ such that $x\in\sigma_t$ for all $t\in P$. Then $(z-x+td)_{t\in P}$ is an arithmetic progression of length $m+1$ in $\Sigma_{n-1}(\pm \Gamma)$, which is a contradiction. 
\end{proof}

The following fact is a widely used and extremely versatile result from algebraic number theory concerning solutions to linear equations in multiplicative subgroups of $\C^*$. This has been an active field of study with contributions from many people, and the following result of Evertse, Schlickewei, and Schmidt \cite{ESS} is in many ways a culmination of this work. In the statement of the result, we say that $\xbar,\ybar\in \C^k$ are \emph{scalar equivalent} if there is some $\lambda\in\C^*$ such that $x_i=\lambda y_i$ for all $i\in [k]$.

\begin{fact}\label{fact:ESS}
Suppose $\Gamma$ is a finitely generated multiplicative subgroup of $\C^*$. Then for all $k\geq 0$ there is some $n>0$ such that, for any $\alpha_1,\ldots,\alpha_k\in \C^*$, 
\begin{enumerate}[$(i)$]
\item $\alpha_1x_1+\ldots+\alpha_kx_k\de 0$ has at most $n$ solutions in $\Gamma^k$ modulo scalar equivalence, and
\item for all $\beta\in\C^*$,  $\alpha_1x_1+\ldots+\alpha_kx_k\de \beta$ has at most $n$ solutions in $\Gamma^k$.
\end{enumerate}
In particular, $\Gamma$ has the weighted sum property by $(ii)$. 
\end{fact}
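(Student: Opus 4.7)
The main content of the fact, namely parts (i) and (ii), is the deep Evertse--Schlickewei--Schmidt theorem (a descendant of Schmidt's subspace theorem), so I would cite \cite{ESS} for these and focus the proposal on the ``in particular'' clause: given (ii), why does a finitely generated multiplicative subgroup $\Gamma\seq\C^*$ have the weighted sum property?

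The plan is a straightforward sign-pattern reduction. Fix $k\geq 1$, and let $n=n(k)$ be the bound supplied by (ii) for $k$-variable equations. For each sign pattern $\bar\epsilon=(\epsilon_1,\ldots,\epsilon_k)\in\{\nv 1,1\}^k$, apply (ii) with coefficients $\alpha_i=\epsilon_i$: for every $\beta\in\C^*$, the equation $\epsilon_1 z_1+\ldots+\epsilon_k z_k\de\beta$ has at most $n$ solutions $(z_1,\ldots,z_k)\in\Gamma^k$. Now any solution $(y_1,\ldots,y_k)\in(\pm\Gamma)^k$ of $y_1+\ldots+y_k\de r$ can be written as $(\epsilon_1 z_1,\ldots,\epsilon_k z_k)$ for some $\bar\epsilon\in\{\nv 1,1\}^k$ and $\bar z\in\Gamma^k$ satisfying the corresponding $\bar\epsilon$-weighted equation with $\beta=r$. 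Summing over the $2^k$ possible sign patterns gives a uniform bound of $2^k n$ on the number of solutions in $(\pm\Gamma)^k$, which is exactly the weighted sum property.

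The only mild subtlety worth flagging is that the surjection $(\bar\epsilon,\bar z)\mapsto(\epsilon_1 z_1,\ldots,\epsilon_k z_k)$ from $\{\nv 1,1\}^k\times\Gamma^k$ onto $(\pm\Gamma)^k$ need not be injective when $-1\in\Gamma$, but surjectivity alone suffices for an upper bound on solution counts, so no problem arises. The essential feature of (ii) that makes the reduction go through is that the bound $n$ depends only on $k$ and is uniform in both the coefficients $\alpha_i$ and the right-hand side $\beta$; without this uniformity one could not collapse the counts across the $2^k$ sign patterns and across all $r\in\Z^+$ into a single constant depending only on $k$. Since the only real work lies in choosing the correct instance of (ii) and keeping careful track of the sign bookkeeping, I do not anticipate a genuine obstacle here.
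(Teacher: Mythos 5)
Your proposal is correct. The paper itself gives no proof of this Fact: parts (i) and (ii) are cited directly from \cite{ESS}, and the ``in particular'' clause is simply asserted without elaboration, so there is no argument in the paper to compare against. Your sign-pattern reduction is a valid way to supply the missing step, and you are right that only surjectivity of $(\bar\epsilon,\bar z)\mapsto(\epsilon_1 z_1,\ldots,\epsilon_k z_k)$ from $\{\nv 1,1\}^k\times\Gamma^k$ onto $(\pm\Gamma)^k$ is needed for an upper bound, so the non-injectivity when $-1\in\Gamma$ is harmless. One slightly shorter route, which avoids enumerating the $2^k$ sign patterns, is to observe that $\pm\Gamma=\{-1,1\}\cdot\Gamma$ is itself a finitely generated multiplicative subgroup of $\C^*$ (append $-1$ to a generating set of $\Gamma$), so one may apply (ii) directly to $\pm\Gamma$ with $\alpha_1=\ldots=\alpha_k=1$ and $\beta=r$, obtaining a bound of $n$ rather than $2^kn$. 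Either way, the feature you emphasize --- that the bound in (ii) is uniform in both the coefficients $\alpha_i$ and the right-hand side $\beta$ --- is exactly what makes the deduction go through.
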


In fact, this is only a weak version of the theorem in \cite{ESS}, in which $\C$ can be replaced by any algebraically closed field $K$ of characteristic $0$, $\Gamma$ is a finite rank multiplicative subgroup of $(K^*)^k$, and $n$ is effective and depends only on $k$ and the rank of $\Gamma$.

From Lemma \ref{lem:VDW} and Fact \ref{fact:ESS} we obtain the following conclusion.

\begin{corollary}\label{cor:mgss}
Suppose $A\seq\Sigma_k(\pm \Gamma)\cap\Z$, where $\Gamma$ is a finitely generated multiplicative subgroup of $\C^*$ and $k\geq 1$. Then, for all $n\geq 1$, $\Sigma_n(\pm A)$ does not contain arbitrarily large arithmetic progressions. In particular, $A$ is sufficiently sparse.
\end{corollary}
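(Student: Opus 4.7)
The plan is to chain the two preceding results via a simple containment. The containment I want is
\[
\Sigma_n(\pm A) \;\subseteq\; \Sigma_{nk}(\pm \Gamma).
\]
This is immediate from the hypothesis $A\seq \Sigma_k(\pm\Gamma)$, since $\Sigma_k(\pm\Gamma)$ is closed under negation (so $\pm A\seq\Sigma_k(\pm\Gamma)$), and a sum of at most $n$ sums of at most $k$ terms from $\pm\Gamma$ is a sum of at most $nk$ terms from $\pm\Gamma$.

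Next, I would invoke Fact \ref{fact:ESS}(ii) to conclude that $\Gamma$ has the weighted sum property, and then apply Lemma \ref{lem:VDW} at the value $nk$: the set $\Sigma_{nk}(\pm\Gamma)$ contains no arbitrarily long arithmetic progressions whose common difference is a positive integer. Any arithmetic progression in $\Sigma_n(\pm A)\seq\Z$ has integer common difference, and by reversing the progression if necessary we may take this difference to be positive. Combined with the containment above, this yields the first assertion of the corollary.

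For the ``in particular'' clause, I just observe that a nontrivial subgroup of $\Z$ has the form $m\Z$ for some $m\geq 1$, and $m\Z$ is itself an infinite arithmetic progression with positive integer common difference $m$; in particular it contains arithmetic progressions of every finite length. So the first part of the corollary immediately rules out $\Sigma_n(\pm A)$ containing any nontrivial subgroup of $\Z$, giving that $A$ is sufficiently sparse in the sense of Definition \ref{def:SS}.

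There is essentially no obstacle here: the real work was done in Lemma \ref{lem:VDW} (the Van der Waerden argument upgrading the weighted sum property to the absence of long APs in iterated sumsets) and in Fact \ref{fact:ESS} (the Evertse--Schlickewei--Schmidt bound supplying the weighted sum property). The only point that needs a sentence of care is the passage from $A\seq\Sigma_k(\pm\Gamma)$ to a bound on the number of summands needed to write an element of $\Sigma_n(\pm A)$; everything else is bookkeeping.
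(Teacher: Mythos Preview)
Your proposal is correct and is precisely the argument the paper has in mind: the paper itself gives no explicit proof, merely stating that the corollary follows from Lemma~\ref{lem:VDW} and Fact~\ref{fact:ESS}, and the containment $\Sigma_n(\pm A)\subseteq\Sigma_{nk}(\pm\Gamma)$ you spell out is the (only) missing link. Your handling of the ``in particular'' clause is also the intended one.
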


It follows that any finitely generated multiplicative submonoid of $\Z^+$ is sufficiently sparse.

\begin{remark}\label{rem:LHRR}
Corollary \ref{cor:mgss} can be used to show that many sets given by recurrence relations are sufficiently sparse. Specifically, suppose $A\seq\Z$ is enumerated by a linear homogeneous recurrence relation $(a_n)_{n=0}^\infty$, with constant coefficients and characteristic roots $\lambda_1,\ldots,\lambda_d\in\C^*$, each of multiplicity $1$. Then there are $\alpha_1,\ldots,\alpha_d\in\C^*$ such that $a_n=\alpha_1\lambda^n_1+\ldots+\alpha_d\lambda^n_d$. Thus if $\Gamma=\langle \alpha_1,\ldots,\alpha_d,\lambda_1,\ldots,\lambda_d\rangle$ then $A\seq\Sigma_d(\Gamma)$, and so $A$ is sufficiently sparse. The multiplicity assumption on the roots is necessary, as $a_n=2^n+n$ is given by the recurrence $a_n=4a_{n-1}-5a_{n-2}+2a_{n-3}$, with characteristic polynomial $(x-2)(x-1)^2$, and $A=\{a_n:n\in\N\}$ is not sufficiently sparse (see Remark \ref{rem:bad}).

In \cite{PoLa}, Lambotte and Point prove superstability for $\cZ(A)$, where $A$ is given by a linear recurrence relation satisfying certain properties. In particular, it covers the case that the characteristic polynomial is irreducible with a real root $\theta>1$ of absolute value strictly greater than the modulus of any other root. This generalizes the results of \cite{CoSS} applied to recurrence relations since, in order for such a recurrence to be geometrically sparse, one needs the modulus of all other roots to be on the closed complex unit disk. 
\end{remark}

\subsection{Induced structure on multiplicative monoids}

In this section, we consider the induced structure $\Gamma^{\cZ}$ where $\Gamma$ is a finitely generated multiplicative submonoid of $\Z^+$.

\begin{definition}
Fix $Q\seq\N_{\geq2}$.
\begin{enumerate}
\item Let $\Gamma(Q)$ be the multiplicative submonoid of $\N$ generated by $Q$. 
\item $Q$ is \textbf{multiplicatively independent} if, for all $k\geq 1$, $\bar{q}\in Q^k$, and $\bar{n}\in\Z^k$, $q_1^{n_1}\ldots q_k^{n_k}=1$ implies $n_i=0$ for all $1\leq i\leq k$ (equivalently, $\{\log q:q\in Q\}$ is $\Q$-linearly independent).
\end{enumerate}
\end{definition}

In the case that $Q$ is a set of primes, $\Gamma(Q)$ is sometimes called the set of \emph{$Q$-smooth numbers}. We first show that if $Q\seq\N_{\geq 2}$ is finite and multiplicatively independent, then $\Gamma(Q)^{\cZ}$ is a virtual reduct of a structure which is monadically stable of finite $U$-rank. We now define this structure.

\begin{definition}
Fix $d\geq 1$. Let $\cN_{d,\ms}=(\N^d,\ms_1,\ldots,\ms_d)$ where, for $1\leq i\leq d$ and $\nbar\in\N^d$, $\ms_i(n_1,\ldots,n_d)=(n_1,\ldots,n_{i-1},n_i+1,n_{i+1},\ldots,n_d)$.
\end{definition}

\begin{proposition}\label{prop:monstab}
For any $d\geq 1$, $\cN_{d,\ms}$ is monadically stable of $U$-rank $1$.
\end{proposition}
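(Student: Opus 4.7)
The plan is to expand $\cN_{d,\ms}$ by an arbitrary family of unary predicates $(U_j)_{j\in J}$, call the result $\cN^+$, and show directly that $\cN^+$ is stable of $U$-rank~$1$ by first identifying algebraic closure, and then establishing a normal form for binary definable relations, which precludes the order property and pins down the $U$-rank. For $A\seq\N^d$, one has $\operatorname{acl}(A)=\operatorname{dcl}(A)$ in $\cN_{d,\ms}$, equal to the set of all $\ms_1^{k_1}\cdots\ms_d^{k_d}(a)$ with $a\in A$ and $\kbar\in\N^d$, together with their backward shifts in any elementary extension. In a suitably saturated $\cN^*\succ\cN_{d,\ms}$, this is the union of the $\Z^d$-orbits of $A$ under the natural extension of the shift action. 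Adding the unary predicates $U_j$ does not enlarge $\operatorname{acl}$: each composition $U_j(\ms_1^{k_1}\cdots\ms_d^{k_d}(x))$ is a unary condition on $x$, so contributes only unary sets (typically infinite), no new finite ones.

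The key observation is a normal form for binary definable sets in $\cN^+$: every such set in variables $(x,y)$ is a boolean combination of unary formulas on $x$ alone, unary formulas on $y$ alone, and atomic equations $\ms_1^{k_1}\cdots\ms_d^{k_d}(x)=\ms_1^{l_1}\cdots\ms_d^{l_d}(y)$. Each such equation is a rigid constraint ``$x$ differs from $y$ by the specific vector $\lbar-\kbar\in\Z^d$'', defining the graph of a partial shift. Because the $U_j$ are unary, they never contribute joint content on $(x,y)$; running quantifier elimination only forces further unary predicates (such as ``the $i$-th coordinate of $x$ is at least $k$''). Therefore every binary definable set in $\cN^+$ is, up to boolean combination, a finite union of ``unary rectangles'' together with graphs of partial shifts, and such sets cannot realize the order property on an infinite set, so $\cN^+$ is stable.

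For $U$-rank~$1$, combine the above: over a parameter set $A$, a non-algebraic $1$-type $p\in S(A)$ asserts that $x$ lies in a $\Z^d$-orbit disjoint from $\operatorname{acl}(A)$, together with the ``orbit coloring'' induced by the $U_j$'s, which is fully determined by $p$ itself. If $B\supseteq A$, any non-algebraic extension $q\in S(B)$ of $p$ must have $x$ in a new orbit with the same coloring, and this specifies $q$ uniquely, making $q$ the unique non-forking extension. Consequently, every forking extension of $p$ is algebraic, so $U(p)=1$, and $\cN^+$ is monadically stable of $U$-rank~$1$.

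The main technical challenge lies in the second paragraph, namely verifying the claimed normal form: the interaction between the monoid action and the arbitrary family $(U_j)_{j\in J}$ must indeed be confined to unary conditions on each side, and quantifier elimination must not produce unexpected joint content. This should go through because each $U_j$ composes with the existing structure only via unary terms $\ms_1^{k_1}\cdots\ms_d^{k_d}(x)$, never via genuine joint terms in $(x,y)$.
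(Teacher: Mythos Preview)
Your approach differs substantially from the paper's. The paper does not carry out any direct quantifier elimination or analysis of definable sets; instead it passes to $\cZ_{d,\ms}=(\Z^d,\ms_1,\ldots,\ms_d)$, observes that this is a $U$-rank $1$ structure with trivial pregeometry, and then invokes Laskowski's characterization of \emph{mutually algebraic} theories \cite[Theorem 3.3]{LaskMAS} as a black box: condition (4) of that theorem identifies mutual algebraicity with ``$U$-rank $1$ and trivial pregeometry'', while condition (2) says mutual algebraicity is preserved under arbitrary expansions by unary predicates. Applying (4) again gives $U$-rank $1$ for the full unary expansion, and the result for $\cN_{d,\ms}$ follows since it is the induced structure on $\N^d\seq\Z^d$. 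This takes only a few lines. The paper's subsequent remark explicitly notes that your hands-on route---quantifier elimination for the expansion by all unary predicates and inverse successors, followed by a direct rank computation---is a viable alternative, citing \cite[Proposition 5.9]{CoSS} for the case $d=1$ and \cite[Theorem 21]{BPW} for the rank.

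Your sketch is on the right track but has two soft spots. First, the normal form in your second paragraph is the entire content of the argument and you have only asserted it; carrying out the QE carefully requires adjoining inverse successors (or, equivalently, predicates ``$i$-th coordinate $\geq k$'') and then checking that an induction on formulas, or a back-and-forth, goes through with the arbitrary unary predicates present. Second, the inference from ``every \emph{binary} definable set has this shape'' to ``$\cN^+$ is stable'' is a gap as written: stability requires ruling out the order property for all partitioned formulas $\phi(\xbar;\ybar)$ with tuple variables, not only singletons. What you want to say is that \emph{after} QE every atomic formula is either unary or an equation $\ms^{\kbar}(x_i)=\ms^{\lbar}(x_j)$ between two single variables, so that any $\phi(\xbar;\ybar)$ is a boolean combination of unary conditions and graphs of partial bijections between a single $x_i$ and a single $y_j$; from this one can count types or verify NOP for arbitrary arities. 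With that in place, your $U$-rank argument is fine.
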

\begin{proof}
Fix $d\geq 1$, and let $\cZ_{d,s}=(\Z^d,\ms_1,\ldots,\ms_d)$. Note that $\ms_1,\ldots,\ms_d$ are commuting bijections on $\Z^d$. Let $T$ be the complete theory of $\cZ_{d,s}$, and let $T_1$ be the complete theory of the expansion of $\cZ_{d,\ms}$ by unary predicates for all subsets of $\Z^d$. We first show $T_1$ has $U$-rank $1$, and the claim is that this follows directly from \cite[Theorem 3.3]{LaskMAS}, which gives four equivalent conditions for a theory to be  ``mutually algebraic". Condition $(4)$  (of \cite[Theorem 3.3]{LaskMAS}) states that such theories are $U$-rank $1$ with trivial pregeometry, and so $T$ is mutually algebraic. Condition (2) then implies that any expansion of any model of $T$ by unary predicates is a mutually algebraic structure, and so, with condition (1), it follows that $T_1$ is mutually algebraic. By condition (4) again, $T_1$ has $U$-rank $1$, as claimed. Finally, since $\cN_{d,\ms}$ is the $\cZ_{d,\ms}$-induced structure on $\N^d\seq\Z^d$, we have the desired result.   
\end{proof}

\begin{remark}
The previous result can be shown ``by hand", by demonstrating quantifier elimination for the expansion of $\cN_{d,\ms}$ by all unary predicates and inverses for the successor functions, in a similar fashion as \cite[Proposition 5.9]{CoSS} (which deals with the case $d=1$). Then this expansion is superstable of $U$-rank $1$ by the characterization of such theories in \cite[Theorem 21]{BPW}, using the notion of \emph{quasi-strong-minimality}. Another interesting way to see just monadic stability of $\cN_{d,\ms}$ (without the bound on $U$-rank) is as follows. Consider the directed graph relation $R$ on $\N^d$ such that $R(\xbar,\ybar)$ holds if and only if $\ybar=\ms_i(\xbar)$ for some $1\leq i\leq d$. If $R^*(\xbar,\ybar)=R(\xbar,\ybar)\vee R(\ybar,\xbar)$, then $(\N^d,R^*)$ is a graph in which every vertex has degree at most $2d$. By \cite[Theorem 1.4]{IvSF}, $(\N^d,R)$ is monadically stable. Now it is straightforward to show that $\cN_{d,\ms}$ is a reduct of the expansion of $(\N^d,R)$ by unary predicates $P_i=\{\nbar\in\N^d:n_i\in 2\N\}$ for $1\leq i\leq d$. 
\end{remark}

The next result identifies the $\cZ$-induced structure on multiplicative submonoids of $\Z^+$ of the form $\Gamma(Q)$, where $Q\seq\N_{\geq 2}$ is finite and multiplicatively independent. 

\begin{theorem}\label{thm:FG0}
Suppose $Q=\{q_1,\ldots,q_d\}\seq\N_{\geq 2}$ is multiplicatively independent, and define $f:\N^d\to \Gamma(Q)$ such that $f(n_1,\ldots,n_d)=q_1^{n_1}\ldots q_d^{n_d}$.
\begin{enumerate}[$(a)$]
\item $\Gamma(Q)_0^{\cZ}$ is interdefinable with $\cN_{d,\ms}$ via $f$.
\item  Let $\cN^{\mod}_{d,\ms}$ be the expansion of $\cN_{d,\ms}$ by unary predicates for sets of the form
\[
\{\nbar\in\N^d:f(\nbar)\equiv_m r\},
\]
 for all $0\leq r<m<\infty$. Then $\Gamma(Q)^{\cZ}$ is interdefinable with $\cN^{\mod}_{d,\ms}$ via $f$. 
 \end{enumerate}
\end{theorem}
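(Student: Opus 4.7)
The plan is to prove (a) directly by exhibiting formulas in both directions, and then deduce (b) from (a) and Proposition \ref{prop:cong}. For the direction showing $\cN_{d,\ms}$ is a virtual reduct of $\Gamma(Q)_0^{\cZ}$ via $f$, the only primitives to handle are the successor functions $\ms_i$, which under $f$ become the binary relations $\{(a,b)\in\Gamma(Q)^2:b=q_ia\}$. I would define these by applying the $(q_i+1)$-ary $\Gamma(Q)_0^{\cZ}$-relation $y-x_1-\ldots-x_{q_i}\de 0$ along the diagonal $x_1=\ldots=x_{q_i}=a$; since $a>0$ and $b=q_ia$, every proper nonempty subsum of $\{b,-a,\ldots,-a\}$ is a nonzero integer multiple of $a$, so nondegeneracy holds.

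For the converse, I need to show that each primitive relation $X_{k,\cbar,r}=\{\abar\in\Gamma(Q)^k:c_1a_1+\ldots+c_ka_k\de r\}$, with $\cbar\in\{\nv 1,1\}^k$ and $r\in\Z$, pulls back via $f$ to a $\cN_{d,\ms}$-definable subset of $(\N^d)^k$. The main input is Fact \ref{fact:ESS}, applied to $\langle Q\rangle\leq\Q^*$, which is a multiplicative subgroup of rank $d$ because $Q$ is multiplicatively independent. When $r\neq 0$, part (ii) gives that $X_{k,\cbar,r}$ is finite, so its $f$-pullback is a finite subset of $(\N^d)^k$ and thus definable with parameters. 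When $r=0$, part (i) gives finitely many scalar-equivalence classes of solutions in $\langle Q\rangle^k$. Multiplicative independence of $Q$ makes $f$ extend to a group isomorphism $f^{\Z}\colon\Z^d\to\langle Q\rangle$, so scalar multiplication by $\mu=f^{\Z}(\nbar)$ corresponds to diagonal $\Z^d$-shift; intersecting each class with $(\N^d)^k$ yields an orbit $O(\mbar_1,\ldots,\mbar_k)=\{(\mbar_1+\nbar,\ldots,\mbar_k+\nbar):\nbar\in\N^d\}$ for some coordinatewise-minimal representative (that is, $\min_i m_{i,l}=0$ for each $l$). Such an orbit is cut out quantifier-free in $\cN_{d,\ms}$ by the conjunction
$$\bigwedge_{i=2}^k \ms_1^{m_{1,1}}\cdots\ms_d^{m_{1,d}}(\xbar_i)=\ms_1^{m_{i,1}}\cdots\ms_d^{m_{i,d}}(\xbar_1),$$
which encodes the $\Z^d$-identity $\xbar_i-\mbar_i=\xbar_1-\mbar_1$; minimality of the representative ensures the common difference automatically lies in $\N^d$.

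Part (b) is then an immediate corollary: Proposition \ref{prop:cong} says $\Gamma(Q)^{\cZ}$ is interdefinable with $\Gamma(Q)_0^{\cZ}$ augmented by the unary predicates $\Gamma(Q)\cap(m\Z+r)$ for $0\leq r<m$, which pull back via $f$ to precisely the predicates $\{\nbar\in\N^d:f(\nbar)\equiv_m r\}$ defining $\cN^{\mod}_{d,\ms}$, and combining with (a) yields the claimed interdefinability. The main technical obstacle is the $r=0$ case above: one must recognize that the scalar-equivalence classes produced by Fact \ref{fact:ESS} correspond via $f$ to diagonal-shift orbits in $(\N^d)^k$, and that these orbits, once normalized, admit a quantifier-free description using only the successor functions $\ms_1,\ldots,\ms_d$.
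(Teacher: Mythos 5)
Your proof is correct and follows essentially the same route as the paper: both directions of (a) are proved exactly as in the paper (expressing $b=q_i a$ via a diagonal instance of the $\de$-relation, and using Fact \ref{fact:ESS} to split into the finite case $r\neq 0$ and the $r=0$ case where scalar-equivalence classes pull back under $f$ to diagonal-shift cosets), and (b) is deduced from (a) via Proposition \ref{prop:cong}. The only cosmetic difference is in the $r=0$ case: you normalize to a coordinatewise-minimal representative per class and cut out each forward orbit by a quantifier-free system of term equations, whereas the paper enlarges the set of representatives to absorb all downward shifts and defines the orbits with an existential formula; both devices achieve the same decomposition of the pullback into finitely many $\cN_{d,\ms}$-definable forward orbits.
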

\begin{proof}
Note that part $(b)$ follows from part $(a)$ by Proposition \ref{prop:cong}. So it suffices to prove part $(a)$.  The map $f$ is clearly surjective, and the assumption that $Q$ is multiplicatively independent says 
precisely that $f$ is injective. We first show that $\Gamma(Q)_0^{\cZ}$ is a virtual reduct of $\cN_{d,\ms}$ (via $f\inv$). So fix $k\geq 1$, $r\in\Z$, and $\cbar\in\{\nv 1,1\}^k$, and define 
\[
X(\cbar,r)=\left\{(\nbar_1,\ldots,\nbar_k)\in\N^{dk}:\sum_{i=1}^k c_if(\nbar_i)\de r\right\}.
\]
We want to show $X(\cbar,r)$ is definable in $\cN_{d,\ms}$. If $r\neq 0$ then $X(\cbar,r)$ is finite by Fact \ref{fact:ESS}$(ii)$. So we may assume $r=0$. Let $X:=X(\cbar,0)$.  By Fact \ref{fact:ESS}$(i)$, there are $\kn(1),\ldots,\kn(t)\in X$ such that, for all $\kn\in X$ there is $1\leq u\leq t$ and $\lambda\in\R^*$ such that, for $1\leq i\leq k$, $f(\kn_i)=\lambda f(\kn(u)_i)$.  Any such $\lambda$ must be of the form $q_1^{m_1}\cdot\ldots\cdot q_d^{m_d}$ for some $\mbar\in\Z^d$.  Now, given $\kn=(\kn_1,\ldots,\kn_k)\in \N^{dk}$ and $\mbar\in\Z^d$, let 
\[
\kn\oplus \mbar=(\kn_1+\mbar,\ldots,\kn_k+\mbar)\mand \kn\ominus \mbar=(\kn_1-\mbar,\ldots,\kn_k-\mbar).
\] 
Note that if $\kn \in X$, $\mbar\in \Z^d$, and $\kn\oplus\mbar\in\N^{dk}$, then $\kn\oplus\mbar\in X$.  Moreover, any $\mbar\in\Z^d$ is of the form $\mbar_1-\mbar_2$ for some $\mbar_1,\mbar_2\in\N^d$, and for any $\kn\in\N^{dk}$ there are only finitely many $\mbar\in\N^d$ such that $\kn\ominus\mbar\in\N^{dk}$. Therefore, after enlarging the set $\{\kn(1),\ldots,\kn(t)\}$ if necessary, we may assume 
\[
X=\bigcup_{u=1}^t\{\kn(u)\oplus \mbar:\mbar\in\N^d\}.
\]
So it suffices to fix $\kn=(\kn_1,\ldots,\kn_k)\in\N^{dk}$  and show that the set $Z(\kn)=\{\kn\oplus \mbar:\mbar\in\N^d\}$ is definable in $\cN_{d,\ms}$. Given $\rbar\in\N^d$, define the function $\ms^{\rbar}=\ms_1^{r_1}\circ\ldots\circ\ms_d^{r_d}$, which is definable in $\cN_{d,\ms}$. Given $\mbar\in\N^d$, define 
\[
I(\mbar)=\bigcap_{i=1}^d\{\nbar\in\N^d:n_i\geq m_i\}=\bigcap_{i=1}^d\{\nbar\in\N^d:\exists \xbar~\ms_i^{m_i}(\xbar)= \nbar\},
\] 
and note that any such $I(\mbar)$ is definable in $\cN_{d,\ms}$. Given $1\leq i\leq k$ and $1\leq j\leq d$, let $r_{i,j}=\kn_{i,j}-\kn_{1,j}$. Let
\[
Z=\{(\ms^{\rbar_1}(\nbar),\ldots,\ms^{\rbar_k}(\nbar)):\nbar\in I(\kn_1)\},
\]
and note that $Z$ is definable in $\cN_{d,\ms}$. We claim $Z(\kn)=Z$. To see this, first fix $\mbar\in\N^d$. For $1\leq j\leq d$, set $n_j=m_j+\kn_{1,j}$, and note that $n_j\geq\kn_{1,j}$. By definition of $r_{i,j}$, we have $\kn\oplus\mbar=(\ms^{\rbar_1}(\nbar),\ldots,\ms^{\rbar_k}(\nbar))$. 
Conversely, fix $\nbar\in\N^d$ such that $n_j\geq\kn_{1,j}$ for all $1\leq j\leq d$. For $1\leq j\leq d$, let $m_j=n_j-\kn_{1,j}$. Then $\mbar\in\N^d$ and $\kn\oplus\mbar=(\ms^{\rbar_1}(\nbar),\ldots,\ms^{\rbar_k}(\nbar))$.

Finally, we must show $\cN_{d,\ms}$ is a virtual reduct of $\Gamma(Q)_0^{\cZ}$ (via $f$). For this, fix $1\leq i\leq d$, and note that
\[
f(\{(\mbar,\nbar)\in\N^d\times\N^d:\nbar=\ms_i(\mbar)\})=A^2\cap\{(x,y)\in\Z^2:y=q_ix\}\qedhere
\]
\end{proof}

\begin{remark}
When $d=1$, the notation and technical details in the previous proof become much simpler. In particular, this gives a short proof that for any integer $q\geq 2$, $(q^{\N})^{\cZ}$ is interdefinable with $\cN^{\mod}_{1,\ms}$ via $n\mapsto q^n$. This was first shown by Palac\'{i}n and Sklinos \cite{PaSk}. 
\end{remark}

\subsection{Main results and consequences}\label{sec:mggcors}

We can now prove the first main result.

\begin{proof}[Proof of Theorem \ref{thm:CMS}]
First, note that for any finitely generated multiplicative submonoid $\Gamma$ of $\Z^+$, there is some finite, multiplicatively independent $Q\seq\N_{\geq 2}$ such that $\Gamma\seq\Gamma(Q)$. So it suffices to assume $\Gamma$ has a multiplicatively independent finite generating set. The result then follows from Corollary \ref{cor:mainstab}, Corollary \ref{cor:mgss}, Proposition \ref{prop:monstab}, and Theorem \ref{thm:FG0}$(a)$. 
\end{proof}

Theorem \ref{thm:CMS} can be used to exhibit some new and interesting behavior in stable expansions of $(\Z,+)$. Recall that a subset $A\seq\Z^+$ is \emph{lacunary} if $\limsup_{n\to\infty}\frac{a_{n+1}}{a_n}>1$, where $(a_n)_{n=0}^\infty$ is a strictly increasing enumeration of $A$. In particular, $A\seq\Z^+$ is not lacunary if and only if $\lim_{n\to\infty}\frac{a_{n+1}}{a_n}=1$. The results of \cite{CoSS} and \cite{PoLa} provide examples of lacunary sets $A\seq\Z^+$ such that $\cZ(A)$ is stable and $\lim_{n\to\infty}\frac{a_{n+1}}{a_n}$ can be chosen arbitrarily close to $1$ (e.g. $A=\{\llbracket b^n\rrbracket :n\in\N\}$ for some fixed $b\in\R^{>1}$, by the main result of \cite{CoSS}, in which case both expansions define the ordering).  On the other hand, many well-known examples of unstable expansions of the form $\cZ(A)$ are given by non-lacunary sets. For example, if $A=\{n^k:n\in\N\}$ for some fixed $k\geq 1$, then $\cZ(A)$ defines the ordering since any sufficiently large integer is a uniformly bounded sum of elements of $A$ (see also \cite[Section 8.1]{CoSS}). In fact, if $k\geq 2$ then $\cZ(A)$ defines multiplication (see \cite[Proposition 6]{Bes}). Another example is the the expansion of $\cZ$ by the (non-lacunary) set  of primes, which is  defines the ordering  (whether multiplication is definable is an open question, see \cite{Bes}).  This motivates the question, asked in both \cite{CoSS} and \cite{PoLa}, of whether there is a stable expansion $\cZ(A)$ where $A\seq \Z^+$ is not lacunary. Theorem \ref{thm:CMS} provides such examples, when combined with the following characterization lacunary multiplicative submonoids of $\Z^+$.

\begin{fact}\label{fact:lacunary}
Let $\Gamma$ be a multiplicative submonoid of $\Z^+$, and suppose $Q\seq\N_{\geq 2}$ is a set of generators. The following are equivalent:
\begin{enumerate}[$(i)$]
\item $\Gamma$ is lacunary.
\item For any $q,r\in Q$, $\log_q r$ is rational.
\item There is an integer $q\geq 2$ such that $\Gamma\seq q^{\N}$.
\end{enumerate}
\end{fact}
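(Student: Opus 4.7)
My plan is to prove the cycle $(iii)\Rightarrow(i)\Rightarrow(ii)\Rightarrow(iii)$.

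The implication $(iii)\Rightarrow(i)$ is immediate: if $\Gamma\seq q^{\N}$ for some integer $q\geq 2$, then consecutive terms in the increasing enumeration of $\Gamma$ differ by a multiplicative factor of at least $q$, so $\liminf_{n\to\infty} a_{n+1}/a_n\geq q>1$ and $\Gamma$ is lacunary. For $(ii)\Rightarrow(iii)$, I would first observe that if $q,r\in\N_{\geq 2}$ satisfy $\log_q r\in\Q$, then $r^b=q^a$ for some coprime $a,b\geq 1$, and unique factorization in $\Z$ forces $q$ and $r$ to share the same set of prime divisors. Consequently all elements of $Q$ have a common finite set $P=\{p_1,\ldots,p_k\}$ of prime divisors, and each $q\in Q$ can be written uniquely as $q=\prod_i p_i^{e_i(q)}$. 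Since $\log p_1,\ldots,\log p_k$ are $\Q$-linearly independent, the hypothesis that $\log q$ and $\log r$ are rationally proportional for every pair translates into the statement that all exponent vectors $(e_1(q),\ldots,e_k(q))$ lie on a single $\Q$-line through the origin in $\Q^k$. The nonzero nonnegative integer points on this line form a rank-$1$ free submonoid generated by a primitive vector $(v_1,\ldots,v_k)\in\Z_{\geq 0}^k\setminus\{0\}$, so taking $m=\prod_i p_i^{v_i}\in\Z_{\geq 2}$ gives $q\in m^{\N}$ for every $q\in Q$, hence $\Gamma\seq m^{\N}$.

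For $(i)\Rightarrow(ii)$, I would argue by contrapositive. Suppose $q,r\in Q$ satisfy $\log_q r\notin\Q$. Then $S:=\langle q,r\rangle$ is a finitely generated submonoid of $\Gamma$ whose generators are multiplicatively independent, so by the Furstenberg result already cited in the introduction, its increasing enumeration $s_0<s_1<\ldots$ satisfies $s_{k+1}/s_k\to 1$. To transfer this to $\Gamma$, take $a\in\Gamma$ with $a\geq s_0$, choose $k$ with $s_k\leq a<s_{k+1}$, and let $a'$ be the successor of $a$ in $\Gamma$; since $s_{k+1}\in\Gamma$ and $s_{k+1}>a$, we have $a'\leq s_{k+1}$, so $a'/a\leq s_{k+1}/s_k$. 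As $a\to\infty$ we force $k\to\infty$, whence $a'/a\to 1$ and $\Gamma$ is not lacunary.

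The substantive input is Furstenberg's theorem on pairs of multiplicatively independent integers; the remainder of the argument is elementary linear algebra over $\Q$ together with the observation that a superset (inside $\Z^+$) of a set with consecutive ratios tending to $1$ inherits the same property.
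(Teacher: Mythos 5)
Your proof is correct, and the key ingredient --- Furstenberg's observation that a multiplicatively independent pair of generators forces consecutive ratios to tend to $1$ --- is the same result the paper cites (Lemma IV.1 of Furstenberg). The paper simply asserts $(ii)\Leftrightarrow(iii)$ is ``straightforward'' and invokes Furstenberg directly for $\neg(iii)\Rightarrow\neg(i)$; you instead give a clean unique-factorization argument for $(ii)\Rightarrow(iii)$ (exponent vectors must lie on a single $\Q$-line, hence are integer multiples of a primitive vector), and you handle $(i)\Rightarrow(ii)$ by applying Furstenberg only to the rank-two submonoid $\langle q,r\rangle$ and then transferring the ratio bound to $\Gamma$ via the interlacing inequality $a'/a\le s_{k+1}/s_k$. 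That transfer step is a modest addition the paper elides, and it has the advantage of only requiring Furstenberg's lemma for finitely generated monoids. Your choice to cycle $(iii)\Rightarrow(i)\Rightarrow(ii)\Rightarrow(iii)$ rather than prove $(ii)\Leftrightarrow(iii)$ separately is cosmetic; the mathematical content is the same.
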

\begin{proof}
The equivalence of $(ii)$ and $(iii)$ is straightforward, and $(iii)\Rightarrow (i)$ is obvious. The implication $(i)\Rightarrow (iii)$ is a result of Furstenberg \cite{FurstLac}. Specifically, in Part IV of \cite{FurstLac}, a submonoid $\Gamma$ is called \emph{non-lacunary} if it satisfies condition $(iii)$, and Lemma IV.1 of \cite{FurstLac} shows that if $\Gamma$ is non-lacunary and $(a_n)_{n=0}^\infty$ is an increasing enumeration of $\Gamma$, then $\lim_{n\to\infty}\frac{a_{n+1}}{a_n}=1$.
\end{proof}

So, by Theorem \ref{thm:CMS}, we have:

\begin{corollary}\label{cor:lacunary}
There exist infinite non-lacunary sets $A\seq\Z^+$ such that $\cZ(A)$ is superstable of $U$-rank $\omega$.
\end{corollary}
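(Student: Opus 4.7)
The plan is to exhibit a single explicit example, namely $A = \Gamma(\{2,3\})$, the multiplicative submonoid of $\Z^+$ consisting of all integers of the form $2^a 3^b$ with $a,b\in\N$. This is a finitely generated multiplicative submonoid of $\Z^+$ with generating set $Q=\{2,3\}\seq\N_{\geq 2}$, so Theorem \ref{thm:CMS} applies to $A=\Gamma$ directly and yields that $\cZ(A)$ is superstable of $U$-rank $\omega$. The only thing that remains is to verify that $A$ is infinite and non-lacunary.

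Infiniteness is immediate since $2^n\in A$ for all $n\in\N$. For non-lacunarity, I would invoke Fact \ref{fact:lacunary} applied to the generating set $Q=\{2,3\}$. The contrapositive of $(i)\Leftrightarrow(ii)$ says that $\Gamma$ fails to be lacunary as soon as there exist $q,r\in Q$ with $\log_q r\notin\Q$. Taking $q=2$ and $r=3$, the classical irrationality of $\log_2 3$ (if $\log_2 3 = p/q$ with $p,q\in\Z^+$, then $2^p = 3^q$, contradicting unique factorization) finishes the job.

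There is no real obstacle here: everything is assembled from results already proved in the paper. The only mild point worth emphasizing is that the proof is genuinely nontrivial in content, because the statement would have been unavailable without Theorem \ref{thm:CMS}. Prior methods, such as those of \cite{CoSS} and \cite{PoLa}, were restricted to lacunary sets, and it is precisely the combination of Fact \ref{fact:ESS} (driving the sufficient sparseness of $\Gamma$ via Corollary \ref{cor:mgss}) with the interpretation of $\Gamma^{\cZ}$ in the monadically stable structure $\cN^{\mod}_{d,\ms}$ (Theorem \ref{thm:FG0}) that makes the non-lacunary example $\Gamma(\{2,3\})$ accessible.
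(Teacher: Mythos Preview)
Your proposal is correct and follows essentially the same approach as the paper, which deduces the corollary immediately from Theorem \ref{thm:CMS} together with Fact \ref{fact:lacunary}; you have simply made the implicit example explicit by choosing $A=\Gamma(\{2,3\})$ and verifying the irrationality of $\log_2 3$.
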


As remarked above, many  unstable expansions of the form $\cZ(A)$ are by non-lacunary subsets $A\seq\Z^+$ (e.g. primes or perfect powers). Moreover, it is not hard to find a lacunary set $A\seq\Z^+$ such that $\cZ(A)$ is unstable (e.g, $A=\{2^n+k:n,k\in\N,~k\leq n\}$; see \cite[Proposition 8.10]{CoSS}). On the other hand, there seems to be no previously known example of an unstable expansion $\cZ(A)$ where $A$ is \emph{strongly lacunary}, i.e., $\liminf_{n\to\infty}\frac{a_{n+1}}{a_n}>1$, where $(a_n)_{n=0}^\infty$ is a strictly increasing enumeration of $A$. In Theorem \ref{thm:badex}, we provide such examples, which, together with Corollary \ref{cor:lacunary}, give positive answers to both parts of Question 8.15 from \cite{CoSS}.

For the next consequence of Theorem \ref{thm:CMS}, we need a technical lemma.

\begin{lemma}\label{lem:unary}
For any integers $q_1,\ldots,q_d\geq 2$ there is a finitely generated multiplicative submonoid $\Gamma$ of $\Z^+$, and a subset $A\seq\Gamma$, such that $\cZ(q_1^{\N},\ldots, q_d^{\N})$ is a reduct of $\cZ(A)$.
\end{lemma}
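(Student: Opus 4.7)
The plan is to construct $A$ by embedding the predicates $q_1^{\N},\ldots,q_d^{\N}$ into a single set using distinct auxiliary primes as ``tags'' that separate the pieces. Since only finitely many primes divide $q_1q_2\cdots q_d$, I can choose pairwise distinct primes $p_1,\ldots,p_d$ each coprime to every $q_j$. Let $\Gamma$ be the multiplicative submonoid of $\Z^+$ generated by $\{q_1,\ldots,q_d,p_1,\ldots,p_d\}$, which is finitely generated, and set
$$A:=\bigcup_{i=1}^{d} p_i\cdot q_i^{\N}\seq\Gamma.$$
This is clearly an infinite subset of $\Gamma$, so the hypotheses of the lemma on $A$ and $\Gamma$ will be met once I verify the reduct claim.

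The key step is to show that each predicate $q_i^{\N}$ is definable in $\cZ(A)$. I propose the formula
$$q_i^{\N}=\{x\in\Z:p_ix\in A\},$$
whose right-hand side is $\cZ(A)$-definable since $x\mapsto p_ix$ is just iterated addition with the integer constant $p_i$. The inclusion $\seq$ is immediate: if $x=q_i^n$, then $p_ix=p_iq_i^n\in A$. For the reverse inclusion, suppose $p_ix\in A$, so $p_ix=p_jq_j^n$ for some $j\in[d]$ and $n\in\N$. Primality of $p_i$ together with $\gcd(p_i,q_j)=1$ forces $p_i\mid p_j$, and since $p_j$ is also prime this gives $p_i=p_j$, hence $i=j$ and $x=q_i^n\in q_i^{\N}$. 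The cases $x\leq 0$ are handled trivially, since $A\seq\Z^+$ implies $p_ix\notin A$ in that range, and $q_i^{\N}\seq\Z^+$.

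Once each $q_i^{\N}$ is obtained by a $\cZ(A)$-formula, it follows immediately that $\cZ(q_1^{\N},\ldots,q_d^{\N})$ is a reduct of $\cZ(A)$, completing the proof. The only potentially delicate point is the uniqueness in decomposing elements of $A$, but this reduces to the elementary number-theoretic fact that distinct primes coprime to each $q_j$ act as unambiguous identifiers for the slots $p_iq_i^{\N}$; no real obstacle arises, and the construction is essentially the minimal one needed to keep the slots disjoint and Presburger-separable inside $A$.
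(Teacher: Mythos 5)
Your proof is correct and takes a genuinely different route from the paper. The paper partitions $\{q_1,\ldots,q_d\}$ according to the equivalence $q\sim r\Leftrightarrow\log_q r\in\Q$, uses Fact \ref{fact:lacunary} to find integers $c_1,\ldots,c_t$ with each class contained in some $c_i^{\N}$ and $c_i\not\sim c_j$ for $i\neq j$, and then sets $A=\bigcup_i c_i^{\N}$. Recovering $c_i^{\N}$ from $A$ requires a nontrivial pigeonhole argument (checking $c_i^m x\in A$ for all $1\leq m\leq t$), and recovering $q_i^{\N}$ from $c_j^{\N}$ requires another finite disjunction involving division of exponents. Your ``tagging'' construction sidesteps all of this: by picking pairwise distinct primes $p_1,\ldots,p_d$ coprime to $q_1\cdots q_d$ and taking $A=\bigcup_i p_iq_i^{\N}$ inside the monoid generated by $\{q_1,\ldots,q_d,p_1,\ldots,p_d\}$, unique factorization immediately gives $q_i^{\N}=\{x:p_ix\in A\}$, which is a single quantifier-free Presburger formula. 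Your argument is shorter, avoids Furstenberg's lemma entirely, and handles the degenerate cases ($x\leq 0$, $n=0$) cleanly; the small trade-off is that your $\Gamma$ may have up to $2d$ generators and your $A$ is a union of shifted geometric progressions rather than of submonoids, but neither matters for the application (Theorem \ref{thm:CMS} only needs $\Gamma$ finitely generated and $A\seq\Gamma$ infinite). This is a clean improvement on the paper's proof.
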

\begin{proof}
Define the equivalence relation $\sim$ on $\N_{\geq 2}$ by $q\sim r$ if and only if $\log_q r$ is rational. Let $R_1,\ldots,R_t$ be the partition of $\{q_1,\ldots,q_d\}$ induced by $\sim$. By Fact \ref{fact:lacunary},  each $R_i$ generates a lacunary multiplicative submonoid of $\Z^+$, and so there is an integer $b_i\geq 2$ such that $R_i\seq b_i^{\N}$. Given $1\leq i\leq t$, let $u_i$ be the least common multiple of the set of \emph{positive integers} $\{\log_{b_i}r:r\in R_i\}$, and set $c_i=b_i^{u_i}$. By construction $c_i\not\sim c_j$ for all $1\leq i<j\leq t$.  Let $A=\bigcup_{i=1}^t c_i^{\N}\seq\Gamma(c_1,\ldots,c_t)$. 

We first show that, for all $1\leq i\leq t$, $c_i^{\N}$ is definable in $\cZ(A)$. So fix $1\leq i\leq t$, and let  $c=c_i$. We claim that $x\in c^{\N}$ if and only if $c^mx\in A$ for all $1\leq m\leq t$. The forward direction is clear, so fix $x\in\Z$ such that $c^mx\in A$ for all $1\leq m\leq t$. For $1\leq m\leq t$, we may fix $1\leq i_m\leq t$ and $n_m\geq 1$ such that $c^mx=c_{i_m}^{n_m}$. If $i_m=i$ for some $1\leq m\leq t$, then we have $x=c^{n_m-m}$ and so $x\in c^{\N}$, as desired. Otherwise, by pigeonhole there are $1\leq m<m_*\leq t$ and $1\leq j\leq t$ such that $j\neq i$ and $i_{m}=j=i_{m_*}$. Then $c^{m}x=c_j^{n_m}$ and $c^{m_*}x=c_j^{n_{m_*}}$, which implies $c^{m_*-m}=c_j^{n_{m_*}-n_m}$, contradicting $c\not\sim c_j$.

Finally, to finish the proof, it suffices to fix $1\leq i\leq d$ and show $q_i^{\N}$ is definable in $\cZ(c_j^{\N})$, where $1\leq j\leq t$ is such that $q_i\in R_j$. Let $q=q_i$, $b=b_j$, $u=u_j$, and $c=c_j=b^u$. By choice of $u$, there is some integer $v\geq 1$ such that $q=b^v$ and $v$ divides $u$. Fix $k\geq 1$ such that $u=kv$. We show that $x\in q^{\N}$ if and only if $q^mx\in c^{\N}$ for some $0\leq m\leq k-1$. For one direction, fix $x\in\Z$ such that $q^mx\in c^{\N}$ for some $0\leq m\leq k-1$. Then, for some $n\geq 1$, $q^mx=c^n=b^{nu}=b^{nkv}=q^{nk}$, and so $x=q^{nk-m}\in q^{\N}$. Conversely, suppose $x=q^n$ for some $n\geq 1$. Let $n=ak+r$ where $0<r\leq k$. Set $m=k-r$, and note $0\leq m\leq k-1$. Then
\[
q^mx=q^{k-r}q^{ak+r}=q^{(1+a)k}=c^{1+a}\in c^{\N},
\]
as desired.
\end{proof}

We can now state and prove the second main result of this section.

\begin{theorem}\label{thm:MUP}
For any integers $q_1,\ldots,q_d\geq 2$, $\cZ(q_1^{\N},\ldots,q_d^{\N})$ is superstable of $U$-rank $\omega$. Therefore  $\cZ(q^{\N})_{q\geq 2}$ is stable.
\end{theorem}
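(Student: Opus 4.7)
The plan is to derive Theorem \ref{thm:MUP} directly from Lemma \ref{lem:unary} and Theorem \ref{thm:CMS}, with a soft compactness argument at the end to pass from finite to infinite unions.

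First, I would handle the finite case $\cZ(q_1^{\N},\ldots,q_d^{\N})$. By Lemma \ref{lem:unary}, there is a finitely generated multiplicative submonoid $\Gamma\seq\Z^+$ and a subset $A\seq\Gamma$ such that $\cZ(q_1^{\N},\ldots,q_d^{\N})$ is a reduct of $\cZ(A)$. The set $A$ is necessarily infinite (since, e.g., $q_1^{\N}$ is definable in $\cZ(A)$ and is itself infinite). Hence Theorem \ref{thm:CMS} applies to give that $\cZ(A)$ is superstable of $U$-rank $\omega$. Since reducts of stable structures are stable and $U$-rank can only decrease when passing to a reduct, $\cZ(q_1^{\N},\ldots,q_d^{\N})$ is stable and $U(\cZ(q_1^{\N},\ldots,q_d^{\N}))\leq\omega$.

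Next, I would argue the $U$-rank is exactly $\omega$. Each $q_i^{\N}$ is infinite and bounded below, so by the Proposition just after Fact \ref{fact:PSexp}, it is not $\cZ$-definable, whence $\cZ(q_1^{\N},\ldots,q_d^{\N})$ is a \emph{proper} stable expansion of $\cZ$. Fact \ref{fact:PSexp} then rules out finite $U$-rank, forcing $U(\cZ(q_1^{\N},\ldots,q_d^{\N}))=\omega$.

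For the second assertion, stability of $\cZ(q^{\N})_{q\geq 2}$, I would use the standard observation that stability is a property of individual formulas: a theory is stable if and only if every first-order formula (in its language) has no infinite order on any model. Any formula $\varphi(\xbar,\ybar)$ in the language of $\cZ(q^{\N})_{q\geq 2}$ mentions only finitely many of the predicates $q^{\N}$, say $q_1^{\N},\ldots,q_d^{\N}$, so it is already a formula in the language of the reduct $\cZ(q_1^{\N},\ldots,q_d^{\N})$. By the first part this reduct is stable, hence $\varphi$ is a stable formula there and, therefore, stable in $\cZ(q^{\N})_{q\geq 2}$ as well. Since this holds for every formula, $\cZ(q^{\N})_{q\geq 2}$ is stable, completing the proof.

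There is no real obstacle: once Lemma \ref{lem:unary} packages the many-predicate structure as a reduct of a single-predicate structure inside a finitely generated monoid, Theorem \ref{thm:CMS} does all the work, and the passage to the infinite case is a one-line appeal to the formula-by-formula nature of stability.
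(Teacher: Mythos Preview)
Your proposal is correct and follows essentially the same route as the paper: apply Lemma~\ref{lem:unary} and Theorem~\ref{thm:CMS} to handle the finite case, invoke Fact~\ref{fact:PSexp} (together with the observation that $q_1^{\N}$ is not $\cZ$-definable) to pin the $U$-rank at exactly $\omega$, and then reduce the infinite-predicate case to the finite one via the formula-by-formula characterization of stability. The paper's proof is terser but structurally identical.
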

\begin{proof}
The first statement follows from Fact \ref{fact:PSexp}, Theorem \ref{thm:CMS}, and Lemma \ref{lem:unary}. For the second statement, recall that stability of a structure is equivalent to stability of all reducts to finite sublanguages.
\end{proof}

In particular, Theorem \ref{thm:MUP} gives the first known examples of proper stable expansions of $(\Z,+)$ by at least two unary predicates each of which is undefinable from the others. Using similar kinds of techniques one can construct a variety of interesting stable expansions of $(\Z,+)$. Here is one more example.

\begin{corollary}
Given $n\geq 1$, let $\Gamma_n$ be the multiplicative submonoid of $\Z^+$ generated by the first $n$ prime numbers. Then $\cZ(\Gamma_n)_{n\geq 1}$ is stable.
\end{corollary}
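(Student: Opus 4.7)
The plan is to show that this follows almost directly from Theorem \ref{thm:CMS}, together with the observation that each $\Gamma_n$ is first-order definable from $\Gamma_N$ (for any $N \geq n$) using only the ambient group structure of $\cZ$. Recall that stability of a structure in an infinite relational language is equivalent to stability of every reduct to a finite sublanguage, so it suffices to fix a finite $I \seq \N_{\geq 1}$ and prove that $\cZ(\Gamma_n)_{n \in I}$ is stable.

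Fix such an $I$ and set $N = \max I$. Since the first $n$ primes are among the first $N$ primes, the generators of $\Gamma_n$ sit inside those of $\Gamma_N$, and so $\Gamma_n \seq \Gamma_N$ for every $n \in I$. The central step is to express each $\Gamma_n$ inside $\cZ(\Gamma_N)$. Writing $p_1 < p_2 < \ldots < p_N$ for the first $N$ primes, every element of $\Gamma_N$ has the form $p_1^{a_1} \cdots p_N^{a_N}$ with $a_i \in \N$, and by unique factorization in $\Z$ such an element lies in $\Gamma_n$ precisely when $a_{n+1} = \ldots = a_N = 0$, i.e.\ when $p_j \nmid x$ for each $n < j \leq N$. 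Each divisibility condition $p_j \mid x$ is a congruence mod $p_j$ and hence already definable in $\cZ$ (cf.\ Remark \ref{rem:Zdef}), so $\Gamma_n$ is defined in $\cZ(\Gamma_N)$ as
\[
\Gamma_n = \Gamma_N \cap \bigcap_{j = n+1}^{N} \{x \in \Z : p_j \nmid x\}.
\]

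Consequently $\cZ(\Gamma_n)_{n \in I}$ is a reduct of $\cZ(\Gamma_N)$. By Theorem \ref{thm:CMS}, $\cZ(\Gamma_N)$ is superstable of $U$-rank $\omega$, and since stability passes to reducts, $\cZ(\Gamma_n)_{n \in I}$ is stable. As $I$ was arbitrary, the full expansion $\cZ(\Gamma_n)_{n \geq 1}$ is stable.

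There is essentially no serious obstacle: the corollary is really an illustration of how flexible Theorem \ref{thm:CMS} is, since any finite packet of monoids $\Gamma_{n_1}, \ldots, \Gamma_{n_k}$ sits inside a single finitely generated multiplicative submonoid $\Gamma_N$, and the individual $\Gamma_{n_i}$'s are then cut out inside $\Gamma_N$ by very simple $\cZ$-definable divisibility conditions. The only point worth checking is that unique factorization really lets one characterize membership in $\Gamma_n$ by those conditions, but this is immediate because each $p_j$ is prime.
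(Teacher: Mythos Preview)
Your proof is correct and takes essentially the same approach as the paper: reduce to a finite sublanguage, then show that each $\Gamma_k$ with $k\leq N$ is definable in $\cZ(\Gamma_N)$, so that Theorem \ref{thm:CMS} applies. The paper's proof is simply a terse version of yours---it asserts the definability without spelling out the divisibility argument you give.
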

\begin{proof}
For any fixed $n\geq 1$, $\Gamma_k$ is definable in $\cZ(\Gamma_n)$ for all $k\leq n$. Now apply Theorem \ref{thm:CMS}.
\end{proof}

We still do not have an example of a strictly stable expansion of $(\Z,+)$ or a superstable expansion with $U$-rank greater than $\omega$ (recall that finite $U$-rank is prohibited by \cite{PaSk}). This motivates the following questions.

\begin{question}
Are $\cZ(q^{\N})_{q\geq 2}$ and $\cZ(\Gamma_n)_{n\geq 1}$ superstable? If so, what are their $U$-ranks? Is the expansion of $\cZ$ by all finitely generated multiplicative submonoids of $\Z^+$ stable?
\end{question}

It is worth mentioning  that the structure $(\Z,+,<,q^{\N})$, for some fixed $q\geq 2$, is also known to be model theoretically tame (e.g. NIP) by quantifier elimination results of Cherlin and Point \cite{ChPo} (see also the remarks after Theorem \ref{thm:badex}). On the other hand, given multiplicatively independent $p$ and $q$, whether $(\Z,+,<,p^\N,q^\N)$ is model theoretically tame appears to be open. In this structure, one can express statements about the number of solutions to equations such as $p^x-q^y=z$, which is a well-studied topic related to rational approximation of algebraic numbers  (see, e.g, \cite{StTij}).  Incidentally, by a result of B\`{e}s \cite{Bes2}, $(\Z,+,V_p(x),q^{\N})$ defines multiplication, where $V_p(x)$ is the largest power of $p$ dividing $x$.

\section{Independent geometric sequences}

The results in this section are motivated by the main result of \cite{CoSS} showing that $\cZ(A)$ is stable for any \emph{geometrically sparse} subset of $\N$. The following is the main definition.

\begin{definition}\cite{CoSS}
A strictly increasing sequence $(\lambda_n)_{n=0}^\infty$ of positive real numbers is \textbf{geometric} if the set $\{\frac{\lambda_n}{\lambda_m}:0\leq m\leq n\}$ is closed and discrete.
\end{definition}

The following crucial fact about geometric sequences is proved in \cite[Lemma 7.3]{CoSS} by modifying an unpublished argument by Poonen (which is similar in flavor to Fact \ref{fact:ESS}).

\begin{fact}\label{fact:Poon}
Suppose $S\seq\R^+$ is such that $\{\frac{s}{t}:s,t\in S,~t\leq s\}$ is closed and discrete. For any $k\geq 1$ there is some $\epsilon>0$ such that, for any $\cbar\in\{\nv 1,1\}^k$ and $\lambda_1,\ldots,\lambda_k\in S$, if $\sum_{i\in I}c_i\lambda_i\neq 0$ for all nonempty $I\seq[k]$, then 
\[
|c_1\lambda_1+\ldots+c_k\lambda_k|\geq \max\{\epsilon\lambda_1,\ldots,\epsilon\lambda_k\}.
\]
\end{fact}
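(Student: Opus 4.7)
The plan is to argue by contradiction via compactness, converting a failure of the inequality into a convergent sequence whose limit produces an exact vanishing of a subsum, which by discreteness must already occur at a finite stage. Assume for contradiction that the conclusion fails for some fixed $k$; then for every $n\geq 1$ one can choose $\cbar^{(n)}\in\{\nv 1,1\}^k$ and $\lambda^{(n)}_1,\ldots,\lambda^{(n)}_k\in S$ satisfying the non-degeneracy hypothesis and
\[
\left|\sum_{i=1}^k c^{(n)}_i\lambda^{(n)}_i\right|<\frac{1}{n}\max_{1\leq i\leq k}\lambda^{(n)}_i.
\]
Because there are only finitely many tuples $\cbar^{(n)}$ and only finitely many indices that can attain the maximum, I pass to a subsequence and assume $\cbar^{(n)}=\cbar$ is constant and that $\lambda^{(n)}_k=\max_i\lambda^{(n)}_i$ for every $n$.

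Next I normalize and take a limit. Set $\mu^{(n)}_i:=\lambda^{(n)}_i/\lambda^{(n)}_k\in(0,1]$, so in particular $\mu^{(n)}_k=1$. Let $R:=\{s/t:s,t\in S,~t\leq s\}$ and $R':=\{1/r:r\in R\}\seq(0,1]$; the hypothesis that $R$ is closed and discrete in $\R^+$ transfers to $R'$ via the self-homeomorphism $x\mapsto 1/x$ of $\R^+$, and each $\mu^{(n)}_i$ lies in $R'$. Compactness of $[0,1]^k$ lets me pass to a further subsequence along which $\mu^{(n)}_i\to\mu_i\in[0,1]$ for each $i$, and passing to the limit in the normalized inequality gives
\[
\sum_{i=1}^k c_i\mu_i=\lim_{n\to\infty}\sum_{i=1}^k c_i\mu^{(n)}_i=0.
\]

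The last step uses discreteness of $R'$ to turn this limit-identity into an exact vanishing at a finite stage. Let $I_+:=\{i\in[k]:\mu_i>0\}$; since $\mu_k=1>0$ we have $k\in I_+$, so $I_+$ is nonempty. For each $i\in I_+$ the value $\mu_i$ is a positive point of the closed discrete set $R'$, hence isolated in $R'$, so $\mu^{(n)}_i=\mu_i$ for all sufficiently large $n$. Since $\mu_i=0$ for $i\notin I_+$, the identity $\sum_i c_i\mu_i=0$ reduces to $\sum_{i\in I_+}c_i\mu_i=0$, and multiplying by $\lambda^{(n)}_k$ yields
\[
\sum_{i\in I_+}c_i\lambda^{(n)}_i=\lambda^{(n)}_k\sum_{i\in I_+}c_i\mu_i=0
\]
for all sufficiently large $n$, contradicting the non-degeneracy hypothesis for $(\cbar,\bar{\lambda}^{(n)})$ at the nonempty subset $I_+\seq[k]$. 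The main obstacle is making the two hypotheses on $S$ cooperate correctly: compactness alone yields only that $\sum_{i\in I_+}c_i\mu^{(n)}_i$ tends to $0$, and it is the discreteness of $R'$ at every positive point that upgrades this convergence to the exact equality $\mu^{(n)}_i=\mu_i$ needed to manufacture the forbidden zero subsum.
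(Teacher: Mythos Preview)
Your argument is correct. The normalization by the maximum coordinate, passage to a convergent subsequence in $[0,1]^k$, and the use of discreteness of $R'=\{1/r:r\in R\}$ in $\R^+$ to force $\mu^{(n)}_i=\mu_i$ eventually for each $i\in I_+$ are all sound, and they combine to give the forbidden vanishing subsum. One small point you might make more explicit: when $\mu_i>0$ you use closedness of $R'$ in $\R^+$ (not in $\R$) to get $\mu_i\in R'$, and this is exactly right since $R'$ need not be closed at $0$.

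As for comparison with the paper: this statement is presented as a \emph{Fact} and is not proved here; the paper cites \cite[Lemma~7.3]{CoSS} and remarks only that the proof there adapts an unpublished argument of Poonen ``similar in flavor'' to the Evertse--Schlickewei--Schmidt theorem. So there is no in-paper proof to compare against. Your compactness-and-discreteness argument is self-contained and elementary, and it is plausibly close in spirit to what is intended by the cited reference, though without access to \cite{CoSS} one cannot be sure of the details.
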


Throughout this section, all enumerated infinite sequences of integers or reals are assumed to be strictly increasing. Given functions $f,g:\N\to\R^+$, we say $f(n)$ is $O(g(n))$ if there is some constant $C$ such that $f(n)\leq Cg(n)$ for all $n\in\N$, and we say $f(n)$ is $o(g(n))$ if, given $\epsilon>0$, $f(n)\leq \epsilon g(n)$ for sufficiently large $n$. 

\begin{definition}\label{def:gs}\cite{CoSS}
A set $A\seq\Z$ is \textbf{geometrically sparse} if there is a set $B=(b_n)_{n=0}^\infty\seq\N$ and a geometric sequence $(\lambda_n)_{n=0}^\infty$ such that $|b_n-\lambda_n|$ is $O(1)$ and $A\seq B+F$ for some finite $F\seq\Z$.
\end{definition}

The following is the main result in \cite{CoSS}.

\begin{fact}\label{fact:CoSS}
If $A\seq\Z$ is geometrically sparse then $\cZ(A)$ is superstable of $U$-rank $\omega$.
\end{fact}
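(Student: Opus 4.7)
The plan is to apply Corollary \ref{cor:mainstab}. Since $\lambda_n\to\infty$ and $F$ is finite, $A$ is infinite and bounded below, hence not definable in $(\Z,+)$. So it remains to show (a) $A$ is sufficiently sparse and (b) $A^{\cZ}_0$ is a virtual reduct of a structure that is monadically stable of finite $U$-rank.

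For (a), the closed and discrete condition on $\{\lambda_n/\lambda_m:m\leq n\}$ isolates $1=\lambda_n/\lambda_n$, so $\lambda_{n+1}/\lambda_n\geq 1+\delta$ for some fixed $\delta>0$; hence $\lambda_n$, and therefore $b_n$, grows at least geometrically. This gives $|A\cap[-N,N]|=O(\log N)$ and $|\Sigma_n(\pm A)\cap[-N,N]|=O((\log N)^n)=o(N)$, while any nontrivial subgroup $d\Z$ has linear density in $[-N,N]$, so $\Sigma_n(\pm A)$ cannot contain such a subgroup.

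For (b), first discard the finitely many $a\in A$ lacking a unique representation $a=b_{n(a)}+f(a)$ with $n(a)\in\N$ and $f(a)\in F$. I claim that $A^{\cZ}_0$ is a (virtual) reduct of $\cM:=(A,=,E,(P_\alpha)_{\alpha\in F})$, where $E(x,y)\Leftrightarrow n(x)=n(y)$ and $P_\alpha(x)\Leftrightarrow f(x)=\alpha$, after naming countably many constants. Since $A/E$ is a pure infinite set and each $E$-class has size at most $|F|$, $\cM$ is a finite cover of a pure set, hence mutually algebraic and therefore monadically stable of $U$-rank $1$. To verify the claim, fix $\cbar\in\{\nv 1,1\}^k$ and $r\in\Z$. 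A candidate non-degenerate $\abar\in A^k$ determines a partition $\pi$ of $[k]$ via $j\sim_\pi j'\Leftrightarrow n(a_j)=n(a_{j'})$ and a tuple $\bar\alpha\in F^k$ via $\alpha_j=f(a_j)$; writing $C_i=\sum_{j\in\pi_i}c_j$, $m_i$ for the common $n$-value of class $\pi_i$, and $\epsilon_n=b_n-\lambda_n$, the original equation becomes
\[
\sum_i C_i\lambda_{m_i} = r - \sum_j c_j\alpha_j - \sum_i C_i\epsilon_{m_i}.
\]
Restricting to indices $i$ with $C_i\neq 0$ and unpacking each $C_i$ as $|C_i|$ copies of $\lambda_{m_i}$ with sign $\operatorname{sign}(C_i)$, the $\Q$-linear independence of the distinct $\lambda_{m_i}$'s forces every nonempty subset sum of this $\pm 1$ combination to be nonzero, so Fact \ref{fact:Poon} gives $|\sum_i C_i\lambda_{m_i}|\geq\epsilon\cdot\max\{\lambda_{m_i}:C_i\neq 0\}$ with $\epsilon>0$ depending only on $k$. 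The right-hand side has modulus $|r|+O(1)+o(\max\{\lambda_{m_i}:C_i\neq 0\})$, so the values $m_i$ with $C_i\neq 0$ are confined to a finite set (depending on $\cbar,r,\pi,\bar\alpha$), while the $m_i$ with $C_i=0$ are free; in the degenerate subcase $\{i:C_i\neq 0\}=\emptyset$ there is an additional compatibility requirement $r=\sum_j c_j\alpha_j$. Naming the elements $b_m+\alpha$ corresponding to the pinned $m$-values as constants, each $R_{\cbar,r}$ becomes a finite union over $(\pi,\bar\alpha)$ of sets defined in $\cM$ using only equality, $E$, the $P_\alpha$'s, and these constants. Corollary \ref{cor:mainstab} then yields the theorem.

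The main obstacle I anticipate is the bookkeeping in the trivial-reduction case: one must translate the non-degeneracy condition on $\sum c_ja_j\de r$ into compatibility conditions on the reduced data $(\pi,\bar\alpha)$, verify that the resulting infinite families of solutions are captured by $E$ and $(P_\alpha)_{\alpha\in F}$ alone (with no further structure on $\N$), and confirm that the resulting countable set of named constants does not disrupt monadic stability or raise the $U$-rank above $1$.
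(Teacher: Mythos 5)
Note first that the paper does not prove this statement: it is quoted as a Fact from \cite{CoSS}, and the machinery of Section 4 here is developed for the distinct hypothesis of ``independently sparse'' (Definition \ref{def:IS}), not ``geometrically sparse'' (Definition \ref{def:gs}). Your proposal has a fatal gap in step (b). You invoke ``the $\Q$-linear independence of the distinct $\lambda_{m_i}$'s'' to conclude that $\sum_i C_i\lambda_{m_i}$ has all nonempty subset sums nonzero and hence (via Fact \ref{fact:Poon}) is large. But Definition \ref{def:gs} imposes no $\Q$-linear independence on $(\lambda_n)_{n=0}^\infty$; that extra hypothesis is precisely what distinguishes independently sparse sets (Definition \ref{def:IS}) from geometrically sparse ones. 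Without it, the claim that $A^{\cZ}_0$ is a virtual reduct of $\cM=(A,=,E,(P_\alpha)_{\alpha\in F})$ (even after naming countably many constants) is false. The cleanest counterexample is $A=2^{\N}$, with $\lambda_n=2^n$ and $F=\{0\}$: then $E$ is equality and your $\cM$ is a pure infinite set, yet by Theorem \ref{thm:FG0}$(a)$ the structure $A^{\cZ}_0$ is interdefinable with $\cN_{1,\ms}=(\N,\ms_1)$ via $n\mapsto 2^n$, which carries a definable successor function. Concretely, the non-degenerate relation $x+y-z\de 0$ on $A^3$ defines $\{(2^n,2^n,2^{n+1}):n\in\N\}$, an infinite partial bijection between disjoint infinite subsets of $A$, which is not definable in a pure infinite set with any number of constants. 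So your reduction cannot work for geometrically sparse sets in general; what you have sketched is instead an argument for the (smaller) class of sets that are simultaneously geometrically sparse and independently sparse, i.e., the $O(1)$ sub-case of Theorem \ref{thm:IS}.

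Step (a) is also incomplete as written. The passage from $|A\cap[-N,N]|=O(\log N)$ to $|\Sigma_n(\pm A)\cap[-N,N]|=O((\log N)^n)$ does not follow by counting small elements, because a sum in $\Sigma_n(\pm A)$ can land in $[-N,N]$ through near-cancellation of arbitrarily large members of $\pm A$. Ruling this out is exactly what Fact \ref{fact:Poon} is for, and your phrase ``This gives\ldots'' silently relies on it; the argument needs to be made explicit, as is done in the (harder, $o(\lambda_n)$) independently sparse case in the proof of Proposition \ref{prop:ISss}.
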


The natural question is whether the $O(1)$ restriction in Definition \ref{def:gs} can be relaxed. For instance, many examples in \cite{PoLa} of sets $A\seq\N$, for which $\cZ(A)$ is stable, are not geometrically sparse, but are such that $|a_n-\lambda_n|$ is $o(\lambda_n)$  for some geometric sequence $(\lambda_n)_{n=0}^\infty$. This motivates the following example, which is also relevant to the discussion of strongly lacunary subsets of $\Z^+$ after Corollary \ref{cor:lacunary}. 

\begin{definition}
Given $q\geq 2$, let $A_q=\{q^n+n:n\in\N\}$.
\end{definition}

\begin{remark}\label{rem:bad}
For any $q\geq 2$, $A_q$ is strongly lacunary since, if $a_n=q^n+n$, then $\lim_{n\to\infty}\frac{a_{n+1}}{a_n}=q$. Moreover, the sequence $(q^n)_{n=0}^\infty$ is geometric and $|a_n-q^n|$ is $o(q^n)$. It is also worth pointing out that $A_q$ is not sufficiently sparse since $(q-1)n=qa_{n+1}-a_{n+2}-qa_1+a_2$ for any $n\in\N$, and so $\Sigma_{2q+2}(\pm A_q)=(q-1)\Z$.
\end{remark}

The first main result in this section is that $\cZ(A_q)$ is unstable for any $q\geq 2$. This question, even for just $A_2$, is discussed in both \cite{CoSS} and \cite{PoLa}. In particular, $A_2$ often emerges as an obstacle in attempts to generalize the ``sparsity assumptions" on a set $A\seq\Z^+$ which ensure $\cZ(A)$ is stable. For this reason, settling the question of whether $\cZ(A_2)$ is stable provides a concrete and useful barrier in future work toward generalizing the methods in \cite{CoSS}, \cite{PoLa}, and \cite{PaSk}. For the proof, we first recall asymptotic density of subsets of $\Z^+$.

\begin{definition}
Given $X\seq\Z^+$, define
\[
d^*(X)=\limsup_{n\to\infty}\frac{|X\cap [n]|}{n}\mand d_*(X)=\liminf_{n\to\infty}\frac{|X\cap[n]|}{n},
\]
called the \textbf{upper asymptotic density} and \textbf{lower asymptotic density} of $X$, respectively.
\end{definition}

\begin{theorem}\label{thm:badex}
For any $q\geq 2$, $\cZ(A_q)$ is interdefinable with $(\Z,+,<,q^x)$, where $q^x$ denotes the function $x\mapsto q^x$ with domain $\N$. In particular, $\cZ(A_q)$ is unstable.
\end{theorem}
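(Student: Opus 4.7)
The plan is to establish interdefinability in two directions. The easy direction is that $A_q = \{q^n + n : n \in \N\}$ is plainly definable in $(\Z, +, <, q^x)$ (using $<$ to carve out $\N = \{x : x \geq 0\}$), so $\cZ(A_q)$ is a reduct of $(\Z, +, <, q^x)$. The main content is the converse: to recover $<$ and the function $q^x$ inside $\cZ(A_q)$.

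The key algebraic observation is the two-step linear recurrence $a_{n+2} = (q+1)a_{n+1} - q a_n - (q-1)$, obtained from $a_{n+1} = q a_n - (q-1)n + 1$ and $a_{n+2} = q a_{n+1} - (q-1)(n+1) + 1$ by eliminating $n$. This motivates defining in $\cZ(A_q)$ the binary relation
\[
S(x, y) :\iff x, y \in A_q \text{ and } (q+1)y - qx - (q-1) \in A_q.
\]
Every successor pair $(a_n, a_{n+1})$ satisfies $S$, with witness $a_{n+2}$.

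The main technical step is the converse: that $S$ essentially captures only successor pairs. Setting $x = a_i$, $y = a_j$, $(q+1)y - qx - (q-1) = a_k$ and separating exponential from linear contributions yields
\[
q^k - q^{j+1} - q^j + q^{i+1} = (q+1)j - qi - k - (q-1).
\]
The right side is linear in $(i, j, k)$, while the left is an integer combination of four powers of $q$. For large indices these powers must cancel, and a straightforward case analysis of possible cancellation patterns among $q^k$, $q^{j+1}$, $q^j$, $q^{i+1}$ (pairing, tripling, or leaving a dominant term) shows that the only viable balance is $k = j+1$ and $i = j-1$, recovering exactly the intended triple $(a_{j-1}, a_j, a_{j+1})$. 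The leftover small-index candidates live in a bounded region of $\N^3$ and form a finite exceptional set $E$, which is therefore definable and can be excised.

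Once successor pairs are thus isolated, the one-step formula $(q-1)n = q a_n - a_{n+1} + 1$ defines a function $F : A_q \to \Z$ with $F(a_n) = (q-1)n$. Its image is $(q-1)\N$ (up to the finite contribution from $E$), so $\N = \{x \in \Z : (q-1)x \in F(A_q)\}$ is definable in $\cZ(A_q)$, as is the ordering $x \leq y :\iff y - x \in \N$. Finally, $q^n = a_n - n$ exhibits the graph of $q^x$ on $\N$ as a definable set, completing the recovery of $(\Z, +, <, q^x)$. The principal obstacle is the case analysis verifying that the recurrence relation $S$ is rigid enough to detect successor pairs; it is elementary but requires enumerating every way the four exponential terms can nearly cancel.
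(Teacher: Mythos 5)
Your proposal is correct, and it takes a genuinely different route from the paper's. Both arguments begin from the same algebraic fact about $a_n=q^n+n$, namely the two-step relation you state (the paper writes it as $(1-q)n=a_{n+2}-qa_{n+1}+q-2$, which is the same identity after eliminating $a_n$). But from there the strategies diverge. You isolate the successor function on $A_q$ directly: you observe that $S(a_i,a_j)$ forces, for large indices, $i=j-1$, via a cancellation analysis on $q^k-q^{j+1}-q^j+q^{i+1}$ (I checked the case analysis and it does close up: since $q^{j+1}$ and $q^j$ are adjacent with the same sign, the only perfect pairing making the left side vanish is $k=j+1$, $i+1=j$, and the competing pairing $k=j$, $i=j$ fails because the right side is then $1-q\neq 0$; all other configurations leave a dominant $q^m$ against a linear right side, bounding the indices). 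With the successor in hand, $n\mapsto (q-1)n$, $\N$, $<$, and the graph of $q^x$ all follow. The paper instead never pins down the successor: it only uses the identity to manufacture a definable set $B\subseteq (q-1)\Z$ containing all of $(q-1)\Z_{\leq 0}$, shows by a direct counting estimate that the positive part $X$ of $B$ has upper density $0$, and then invokes Nash--Nathanson to conclude that a bounded sumset of $C=((q-1)\Z^+)\setminus X$ (together with $\{0,1\}$) covers $\N$, making $\N$ definable without ever deciding exactly which pairs $(u,v)$ satisfy $u-qv+q-2\in A_q$. Your approach is more elementary (no appeal to an additive-combinatorics lemma) and yields more structural information (the successor relation on $A_q$ is itself definable, up to a finite exceptional set, which is a cleaner statement than what the density argument gives directly); the paper's approach has the advantage of being insensitive to the fine diophantine structure of the recurrence and so would be more robust if the ``case analysis'' were intractable. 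Two very minor points to tighten in a write-up: the excised exceptional set $E$ is finite hence definable with parameters, which the paper's conventions allow; and the image of $F$ is $(q-1)\N$ minus finitely many values, so $\{x:(q-1)x\in F(A_q)\}$ differs from $\N$ by a finite set, which must be patched explicitly before declaring $\N$ definable.
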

\begin{proof}
Note that $A_q$ is definable in $(\Z,+,<,q^x)$, since $\N$ is definable. So it suffices to show $<$ and $q^x$ are definable in $\cZ(A_q)$. Given $n\in\N$, let $a_n=q^n+n$. We first assume $<$ is definable in $\cZ(A_q)$, and use this to show $q^x$ is definable.  If $<$ is definable in $\cZ(A_q)$ then the successor function $s: A_q\to A_q$, such that $s(a_n)=a_{n+1}$, is definable. Since $(q-1)q^{\N}=\{s(a)-a-1:a\in A_q\}$, we have that $q^{\N}$ is definable in $\cZ(A_q)$. Now $y=q^x$ if and only if $y\in q^{\N}$, $0\leq x<y$, and $x+y\in A_q$.

Now we show that $<$ is definable in $\cZ(A_q)$. Since $\cZ(A_q)$ expands the group structure, it suffices to show $\N$ is definable. As observed in Remark \ref{rem:bad}, the following identity holds for any $n\in\N$,
\[
(1-q)n=a_{n+2}-qa_{n+1}+q-2.
\]
Define
\[
 B=(q-1)\Z\cap\{u-qv+q-2:u,v\in A_q\},
 \]
Then $B$ is definable in $\cZ(A_q)$, and $\{b\in B:b\leq 0\}=(q-1)\Z_{\leq 0}$. Let $X=\{b\in B:b>0\}$ and $C=((q-1)\Z^+)\backslash X$. Then $C$ is definable in $\cZ(A_q)$ since $C=((q-1)\Z)\backslash B$. 

To prove $\cZ(A_q)$ defines $\N$, we will show $d^*(X)=0$. This will suffice since, if $d^*(X)=0$, then $d_*(C)=d_*((q-1)\Z^+)-d^*(X)=\frac{1}{q-1}>0$ and so, by a result of Nash and Nathanson \cite[Lemma 1]{NaNa}, there is some $n>0$ such that $\Sigma_n(C\cup\{0,1\})=\N$. 

To show $d^*(X)=0$, let $V$ be the set of $z\in\Z^+$ such that $z=u-qv+q-2$ for some $u,v\in A_q$. Then $X=V\cap(q-1)\Z^+$, so it suffices to show $d^*(V)=0$. Any element of $V$ is of the form 
\[
a_l-qa_k+q-2=q^l-q^{k+1}+l-kq+q-2,
\]
for some $k,l\geq 0$. Since every element of $V$ is positive, we claim that $k$ and $l$ must also satisfy $k+1\leq l$. Indeed, if $l\leq k$, then
\[
a_l-qa_k+q-2\leq q^l(1-q^{k-l+1})+k+q-kq-2\leq 0.
\]
So, setting $r=l-k-1$, we have that every element of $V$ is of the form,
\[
f(k,r):=q^{k+1}(q^r-1)+k+r-kq+q-1,
\]
for some $k\geq 0$ and $r\geq 0$. Given $n\geq 1$, define
\[
g(n)=|\{(k,r)\in\N\times\N:1\leq f(k,r)\leq n\}|.
\]
Then $|V\cap [n]|\leq g(n)$ and so, to show $d^*(V)=0$, it suffices to show $g(n)$ is $o(n)$. In the following, $\log$ denotes $\log_q$. First note that if $r\geq 1$ then
\[
f(k,r)\geq q^{k+1}-kq\geq q^k,
\]
and so if $f(k,r)\leq n$ then $k\leq \log n$. On the other hand, $f(k,0)=(k-1)(1-q)$, and so $f(k,0)\leq 0$ whenever $k\geq 1$. Thus, for any $k,r\geq 0$, if $1\leq f(k,r)\leq n$ then $k\leq \log n$. Now we also have
\[
f(k,r)\geq q^r-1-kq+q-1\geq q^r-kq.
\]
Therefore, for any $k,r\geq 0$, if $1\leq f(k,r)\leq n$ then
\[
q^r\leq n+kq\leq n+q\log n\leq (q+1)n,
\]
 and so $r\leq c+\log n$ for $c=\log(q+1)$. Altogether, for any $k,r\geq 0$, if $1\leq f(k,r)\leq n$ then $k\leq \log n$ and $r\leq c+\log n$, and so
\[
g(n)\leq (c+\log n)\log n,
\]
So $g(n)$ is $o(n)$, as desired.
\end{proof}

Quantifier elimination and decidability for $(\Z,+,<,q^x)$ was investigated by Semenov \cite{Semenov}. In \cite{ChPo}, Cherlin and Point give a detailed account for $q=2$. It is also interesting to note that $(\Z,+,<,q^{\N})$ is  a \emph{proper} reduct of $(\Z,+,<,q^x)$. In particular, if $V_q\colon \Z^+\to q^{\N}$ is such that $V_q(x)$ is the greatest power of $q$ dividing $x$, then $(\Z,+,<,V_q(x))$ is decidable while $(\Z,+,<,q^x,V_q(x))$ is undecidable.\footnote{The history of the first claim is given in \cite{MiVi}; the second claim is shown in \cite{ChPo} for $q=2$, and the generalization to arbitrary $q\geq 2$ is straightforward.} It follows that $q^x$ is not definable in $(\Z,+,<,V_q(x))$, and thus not definable in $(\Z,+,<,q^{\N})$.

Theorem \ref{thm:badex} shows that if $O(1)$ is weakened to $O(n)$ in Definition \ref{def:gs}, then the resulting analog of Fact \ref{fact:CoSS} fails. So we ask:

\begin{question}
Is there a set $A\seq\N$ such that $\cZ(A)$ is unstable and $|a_n-\lambda_n|$ is $o(n)$, for some geometric sequence $(\lambda_n)_{n=0}^\infty$?
\end{question} 

The second main result of this section is that the asymptotic bound $O(1)$ underlying Fact \ref{fact:CoSS} can be substantially weakened if one makes further assumptions on the associated geometric sequence.

\begin{definition}\label{def:IS}
A set $A\seq\Z$ is \textbf{independently sparse} if there is a set $B=(b_n)_{n=0}^\infty\seq\N$ and a geometric sequence $(\lambda_n)_{n=0}^\infty$ such that $\{\lambda_n:n\in\N\}$ is $\Q$-linearly independent, $|b_n-\lambda_n|$ is $o(\lambda_n)$, and $A\seq B+F$ for some finite $F\seq\Z$.
\end{definition}

\begin{example}\label{ex:IS}
Suppose $\tau>1$ is transcendental. Then the sequence $(\tau^n)_{n=0}^\infty$ is $\Q$-linearly independent and geometric, and so a straightforward example of an independently sparse set, which is not geometrically sparse, is the set enumerated by $a_n=\llbracket\tau^n\rrbracket+n$ (where $\llbracket\cdot \rrbracket$ denotes integer part). More generally, fix algebraically independent reals $\tau_1,\ldots,\tau_k>1$ and define $(\lambda_n)_{n=0}^\infty$ such that $\lambda_0=\tau_1$ and $\lambda_{n+1}=c_n\tau_{i_n}\lambda_n$ for arbitrarily chosen $c_n\in\Z^+$ and $i_n\in[k]$. Then $(\lambda_n)_{n=0}^\infty$ is $\Q$-linearly independent and geometric. So for any function $g\colon\N\to\R^+$ such that $g(n)$ is $o(\lambda_n)$ (e.g. if $g(n)$ is $2^{o(n)}$), the set $A=\{\llbracket\lambda_n+g(n)\rrbracket:n\in\N\}$ is independently sparse (taking $B=A$ and $F=\{0\}$ in Definition \ref{def:IS}). 
\end{example}

\begin{remark}
The class of independently sparse sets satisfies the following strong closure property: if $A\seq\Z$ is independently sparse and $F\seq\Z$ is finite, then any subset of $A+F$ is independently sparse. The class of geometrically sparse sets also satisfies this property.
\end{remark}

The main result of this section is the following theorem.

\begin{theorem}\label{thm:IS}
If $A\seq\Z$ is independently sparse and infinite then $\cZ(A)$ is superstable of $U$-rank $\omega$.
\end{theorem}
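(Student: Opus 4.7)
The plan is to invoke Corollary \ref{cor:mainstab} with $C := B + F$ as the ambient set. Since $A \seq C$ and $C$ is bounded below (because $B \seq \N$ and $F$ is finite), the observation that infinite sets bounded below are not $\cZ$-definable applies, so it suffices to verify (i) $C$ is sufficiently sparse, and (ii) $C^{\cZ}_0$ is a virtual reduct of a structure which is monadically stable of finite $U$-rank.

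The technical core is a dichotomy for any $\de$-expression $\sum_{i=1}^k \epsilon_i c_i = r$ with $c_i \in C$, $\epsilon_i \in \{\nv 1, 1\}$, and $k \leq n$. Writing each $c_i = b_{n_i} + f_i$ with $b_{n_i} \in B$ and $f_i \in F$, setting $m := \max_i n_i$, and grouping by distinct indices $j$, one obtains
\[
r - \sum_{i=1}^k \epsilon_i f_i = \sum_{j \in J} c'_j \lambda_j + \sum_{j \in J} c'_j (b_j - \lambda_j),
\]
with $|c'_j| \leq n$. The $\Q$-linear independence of $\{\lambda_n\}$ forces every subsum of the $\pm 1$-expansion of $\sum c'_j \lambda_j$ to have single-sign coefficients per index, hence to be nonzero; Fact \ref{fact:Poon} then yields a dichotomy: either all $c'_j = 0$ (the \emph{degenerate} case), or $|\sum c'_j \lambda_j| \geq \eta \lambda_m$ for a constant $\eta = \eta(n) > 0$. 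In the latter case, the asymptotic $|b_j - \lambda_j| = o(\lambda_j)$ combined with the boundedness of the $f_i$-contributions forces $\lambda_m = O_n(|r| + 1)$.

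For (i), any $x \in \Sigma_n(\pm C)$ reduces to a $\de$-representation; the dichotomy places $x$ either in the finite set $\Sigma_n(\pm F)$ or gives $\max n_i = O_n(\log|x|)$. Hence $|\Sigma_n(\pm C) \cap [-M, M]| = O_n((\log M)^n) = o(M)$, which cannot contain any nontrivial subgroup of $\Z$. For (ii), each atomic relation $R_{\epsilon, r}(\cbar) \equiv \sum \epsilon_i c_i \de r$ splits analogously: the nondegenerate branch is finite, while the degenerate branch forces each index $j$ occurring among the $n_i$ to appear equally often with each sign, so that the tuple decomposes into pairs $(c_i, c_{i'})$ with $c_i - c_{i'} \in F - F$ and $\epsilon_i = \nv \epsilon_{i'}$. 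Defining $\sim$ on $C$ by $c \sim c'$ iff $c - c' \in F - F$, this equivalence relation (eventually) coincides with sharing a $b$-part and has classes of size at most $|F|$; hence the expansion $(C, \sim, \{P_f\}_{f \in F})$, where $P_f := (B + f) \cap C$, is mutually algebraic in the sense invoked in Proposition \ref{prop:monstab}, and thus monadically stable of $U$-rank $1$. The atomic relations of $C^{\cZ}_0$ are then definable in this structure after naming the finitely many low-index exceptions as parameters.

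The main obstacle is the degenerate branch of (ii): the "$\de$" condition on $\sum \epsilon_i c_i$ is strictly weaker than the analogous condition on $\sum \epsilon_i b_{n_i}$, because of the $f_i$-shifts. One must verify carefully that the degenerate solutions, apart from finitely many low-index exceptions, are governed purely by the pairing/equivalence structure on $(n_i, f_i)$ expressed by $\sim$ together with the unary predicates $\{P_f\}$, rather than by finer additive combinations of $f$-shifts that could threaten monadic stability.
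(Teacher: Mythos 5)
Your overall plan — apply Corollary \ref{cor:mainstab} with $C = B+F$ as the ambient set, verify that $C$ is sufficiently sparse, and interpret $C^{\cZ}_0$ in a monadically stable structure of finite $U$-rank — is sound and runs parallel to the paper's strategy. Your sparsity argument (i) is essentially the paper's Proposition \ref{prop:ISss}. For (ii), you propose a genuinely different target structure: where the paper enumerates $A=(a_n)$ and interprets $A^{\cZ}_0$ in $\cN^1_{\ms}=(\N,\ms,\text{unary preds})$ via Lemmas \ref{lem:IStech} and \ref{lem:ISfin}, you instead interpret $C^{\cZ}_0$ directly in the structure $(C,\sim,\{P_f\}_{f\in F})$ with $\sim$ meaning ``differ by an element of $F-F$.'' Since $\sim$ eventually has classes of bounded size, such a structure is mutually algebraic, hence (after naming a finite set of exceptional small elements) monadically stable of $U$-rank $1$, so the choice of ambient structure is legitimate and avoids the enumeration bookkeeping. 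Two amendments are needed, though. First, an element $c\in C$ can admit several decompositions $b_n+f$ with $f\in F$, so the $P_f$ as defined need not partition $C$; you must fix a decomposition $h\colon C\to F$ and use the predicates $H_f=h^{-1}(f)$, mirroring the paper's choice of the map $f\colon\N\to\N$ with $a_n=b_{f(n)}+r_n$. Second, and more seriously, you are right to flag the degenerate branch as the main obstacle, but flagging it is not resolving it: this is precisely the content of the paper's Lemma \ref{lem:IStech}, which the paper itself describes as ``elementary but technical,'' and you assert its conclusion rather than prove it.

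To be concrete about why the gap is substantive: for fixed $\cbar\in\{\nv 1,1\}^k$ and $r\in\Z$, the degenerate tuples are those whose partition $\cP$ by $b$-part has $\sum_{i\in P}c_i=0$ for every $P\in\cP$. Showing definability of the $\de$-relation on these tuples requires ranging over all such partitions $\cP$, over all assignments of $f$-values compatible with $\sum c_if_i=r$, and, crucially, checking for \emph{every} proper nonempty $I\subsetneq[k]$ whether the restriction of $\cP$ to $I$ is again degenerate: if not, the nonvanishing $\sum_{i\in I}c_ic_{n_i}\neq 0$ is automatic once the $b$-parts are large enough (via Fact \ref{fact:Poon} and $\Q$-linear independence); if so, nonvanishing becomes a closed condition on the tuple of $f$-values alone, and that condition must be expressed via $\sim$ and $\{H_f\}$. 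Only after this exhaustive case analysis over subsets $I$ and partitions, together with naming the finitely many exceptional low-$b$-part elements where the lower bound $\eta\lambda_m$ from Fact \ref{fact:Poon} does not yet dominate the $o(\lambda_m)$ error and the $F$-contributions, does one obtain a definition. The paper carries out exactly this bookkeeping (with its $U(P,\ubar)$ sets, the finite index $\Delta$, and the two containments); your proposal identifies what must be done but leaves the verification open.
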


\begin{remark}
Theorem \ref{thm:IS} generalizes the ``transcendental limit" case of a result of Lambotte and Point \cite[Theorem 3.8]{PoLa}, which shows that if $A=(a_n)_{n=0}^\infty$ is such that $|a_n-\alpha\tau^n|$ is $o(\tau^n)$ for some $\alpha>0$ and transcendental $\tau>1$, then $\cZ(A)$ is superstable of $U$-rank $\omega$.
\end{remark}

The proof strategy for Theorem \ref{thm:IS} is the same as in Section \ref{sec:MGG}. We first show that independently sparse sets are sufficiently sparse, and then we interpret the induced structure on such a set in a superstable structure of finite $U$-rank.

 \begin{proposition}\label{prop:ISss}
If $A\seq\N$ is independently sparse then it is sufficiently sparse.
\end{proposition}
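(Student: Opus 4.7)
\medskip
\noindent\textbf{Proof plan.} The strategy is to prove the stronger statement that $d^*(\Sigma_n(\pm A))=0$ for every $n\geq 1$. Since any nontrivial subgroup $d\Z\seq\Z$ has lower density $1/d>0$, this immediately gives sufficient sparsity. First I reduce to $B$: the inclusion $A\seq B+F$ yields $\pm A\seq (\pm B)+(\pm F)$, and hence $\Sigma_n(\pm A)\seq \Sigma_n(\pm B)+\Sigma_n(\pm F)$. Because $\Sigma_n(\pm F)$ is a finite set, $d^*(\Sigma_n(\pm A))\leq |\Sigma_n(\pm F)|\cdot d^*(\Sigma_n(\pm B))$, so it suffices to show $d^*(\Sigma_n(\pm B))=0$.

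Given $x\in\Sigma_n(\pm B)$, write $x=\sum_{i=1}^m c_i b_{k_i}$ with $m\leq n$ and $c_i\in\{\nv 1,1\}$. Grouping by index produces a canonical form $x=\sum_{k\in K'}d_k b_k$, where $d_k=\sum_{i:\,k_i=k} c_i$ and $K'=\{k:d_k\neq 0\}$, so $|d_k|\leq n$ and $\sum_{k\in K'}|d_k|\leq n$. Writing $b_k=\lambda_k+e_k$ with $|e_k|$ being $o(\lambda_k)$, we split
\[
x=\sum_{k\in K'}d_k\lambda_k+\sum_{k\in K'}d_k e_k.
\]
The crucial step is a lower bound $|\sum_{k\in K'}d_k\lambda_k|\geq \epsilon_n\lambda_{k^*}$, where $k^*=\max K'$ and $\epsilon_n>0$ depends only on $n$. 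To reach Fact \ref{fact:Poon}, I expand each $d_k\lambda_k$ as $|d_k|$ copies of $\operatorname{sign}(d_k)\lambda_k$, obtaining a sum of at most $n$ terms in $\{\nv 1,1\}\times\{\lambda_k:k\in K'\}$ in which each distinct $\lambda_k$ appears with a single sign. A vanishing nonempty subsum would, by $\Q$-linear independence of $(\lambda_n)$, require balanced signs on each $\lambda_k$ used; since only one sign is present per $k$, no such subsum exists. Fact \ref{fact:Poon} then delivers the required bound.

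Now I absorb the error: $|\sum_{k\in K'}d_k e_k|\leq \sum_{k\in K'}|d_k|\,|e_k|$, and splitting $K'$ into small and large indices with $|e_k|\leq (\epsilon_n/(2n))\lambda_k$ for large $k$ shows this is at most $(\epsilon_n/2)\lambda_{k^*}$ plus a constant depending only on $n$. Hence $|x|\geq (\epsilon_n/4)\lambda_{k^*}$ for $\lambda_{k^*}$ sufficiently large. The closed-discrete ratio condition gives $\delta>0$ with $\lambda_{k+1}/\lambda_k\geq 1+\delta$, so $|x|\leq M$ forces $k^*\leq C_n\log M$ (plus finitely many exceptions). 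The number of canonical forms $(K',(d_k))$ with $K'\seq\{0,\ldots,C_n\log M\}$, $|K'|\leq n$, and $0<|d_k|\leq n$ is $O_n((\log M)^n)$, and each canonical form determines $x=\sum d_k b_k$ uniquely. Therefore $|\Sigma_n(\pm B)\cap[-M,M]|=O_n((\log M)^n)=o(M)$, giving $d^*(\Sigma_n(\pm B))=0$.

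The main obstacle is the mismatch between the $\pm 1$ coefficients in Fact \ref{fact:Poon} and the integer coefficients $d_k$ arising after grouping; the $\Q$-linear independence assumption is exactly what makes this mismatch harmless, since it guarantees the expanded $\pm 1$ sum is automatically non-degenerate. This is the sole place where independent sparsity is used beyond what a general geometric sequence would provide.
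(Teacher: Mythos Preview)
Your proof is correct and follows essentially the same strategy as the paper: reduce to $B$, use Fact~\ref{fact:Poon} together with $\Q$-linear independence to bound $|x|$ below by $\epsilon\lambda_{k^*}$, absorb the $o(\lambda_n)$ error, and then count representations via a logarithmic bound on $k^*$. The only organizational difference is that you group by index first to obtain a canonical form with nonzero integer coefficients $d_k$ and then expand to a $\pm 1$ sum, verifying non-degeneracy directly from the single-sign-per-index observation; the paper instead works with the $\de$ decomposition and shows that vanishing of a $\lambda$-subsum forces vanishing of the corresponding $a$-subsum, which is excluded by membership in $X_*(\bar c)$. These are two phrasings of the same mechanism, and your version is arguably a bit more direct.
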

\begin{proof}
First, for any $A\seq\Z$, $A$ is sufficiently sparse if and only if, for all $n\geq 1$, $d_*(\Sigma_n(\pm A)\cap \Z^+)=0$ (this is a direct consequence of \cite{NaNa}; see \cite[Proposition 4.2]{CoSS}). It follows that if $A\seq\Z$ is sufficiently sparse and $F\seq\Z$ is finite, then $A+F$ is sufficiently sparse (see, e.g, \cite[Lemma 4.3]{CoSS}). Altogether, to prove the claim, it suffices to fix $A=(a_n)_{n=0}^\infty\seq\N$ and a $\Q$-linearly independent geometric sequence $(\lambda_n)_{n=0}^\infty$, such that $|a_n-\lambda_n|$ is $o(\lambda_n)$, and prove that $A$ is sufficiently sparse.

Given $k\geq 1$ and $\cbar\in\{\nv 1,1\}^k$, let $X(\cbar)=\{\nbar\in\N^k:c_1a_{n_1}+\ldots+c_ka_{n_k}\neq 0\}$, and then define $X_*(\cbar)=\{\nbar\in\N^k:(n_i)_{i\in I}\in X((c_i)_{i\in I})\text{ for all nonempty $I\seq[k]$}\}$.

\noit{Claim}: For any $k\geq 1$, there is $\epsilon>0$ and $N>0$ such that, for any $\cbar\in\{\nv 1,1\}^k$ and $\nbar\in X_*(\cbar)$, if $\max\nbar\geq N$ then $|c_1a_{n_1}+\ldots+c_ka_{n_k}|\geq\epsilon\lambda_{\max\nbar}$. 

\noit{Proof}: Given $k\geq 1$, $\cbar\in\{\nv 1,1\}^k$, and $\nbar\in\N^k$, set $g(\cbar,\nbar)=c_1\lambda_{n_1}+\ldots+c_k\lambda_{n_k}$. Fix $k\geq 1$, $\cbar\in\{\nv 1,1\}^k$, and $\nbar\in\N^k$, and suppose $g(\cbar,\nbar)=0$. We first show $\nbar\not\in X(\cbar)$. Let $\cP$ be the partition of $[k]$ induced by the equivalence relation $n_i=n_j$. Given $P\in \cP$, let $n_P$ be the unique value of $n_i$ for $i\in P$, and let $c_P=\sum_{i\in P}c_i$. Then $n_P\neq n_Q$ for distinct $P,Q\in\cP$, and  $\sum_{P\in \cP}c_P\lambda_{n_P}=g(\cbar,\nbar)=0$. By $\Q$-linear independence, $c_P=0$ for all $P\in\cP$. Therefore $c_1a_{n_1}+\ldots+c_ka_{n_k}=\sum_{P\in\cP}c_Pa_{n_P}=0$. 

By Fact \ref{fact:Poon} and the above, there is $\epsilon>0$ such that $|g(\cbar,\nbar)|\geq 4\epsilon\lambda_{\max\nbar}$ for all $\cbar\in\{\nv 1,1\}^k$ and $\nbar\in X_*(\cbar)$. For $n\in\N$, let $\theta_n=a_n-\lambda_n$.  Since $|a_n-\lambda_n|$ is $o(\lambda_n)$, there is $M>0$ such that for all $n\geq M$, $|\theta_n|\leq \frac{2\epsilon}{k}\lambda_n$. Define
\[
\theta=\max\{|c_1\theta_{n_1}+\ldots+c_l\theta_{n_l}|:l< k,~\cbar\in\{\nv 1,1\}^l,~n_i<M\text{ for all $1\leq i\leq l$}\}.
\]
For any $\cbar\in\{\nv 1,1\}^k$ and $\nbar\in\N^k$, if $\max\nbar\geq M$ then
\[
|c_1\theta_{n_1}+\ldots+c_k\theta_{n_k}|\leq \theta+2\epsilon\lambda_{\max\nbar}.
\]
Therefore, for any $\cbar\in\{\nv 1,1\}^k$ and $\nbar\in X(\cbar)$, if $\max\nbar\geq M$ then
\[
|c_1a_{n_1}+\ldots+c_ka_{n_k}|\geq |g(\cbar,\nbar)|-|c_1\theta_{n_1}+\ldots+c_k\theta_{n_k}|\geq 2\epsilon\lambda_{\max \nbar}-\theta.
\]
Now choose $N\geq M$ such that $\epsilon\lambda_N\geq \theta$. \claim

Fix $k\geq 1$. We want to show $\Sigma_k(\pm A)$ does not contain a nontrivial subgroup of $\Z$. Let $B=\Sigma_k(\pm A)\cap\Z^+$. We will show $d^*(B)=0$.  Define $f\colon B\to\N$ so that, given $x\in B$, $f(x)=\max\nbar$ where $\nbar\in \N^l$ for some $l\leq k$ and $c_1a_{n_1}+\ldots+c_la_{n_l}\de x$ for some $\cbar\in\{\nv 1,1\}^l$. By the claim, we may fix $\epsilon>0$ and $N>0$ such that for all $x\in B$, if $f(x)\geq N$ then $x\geq \epsilon\lambda_{f(x)}$. Let $b=\inf\{\frac{\lambda_{n+1}}{\lambda_n}:n\in\N\}$. Then $b>1$ since $(\lambda_n)_{n=0}^\infty$ is geometric. For any $n\geq N$, if $x\in B\cap[n]$ then $f(x)\leq \max\{N,\log_b\frac{n}{\epsilon\lambda_0}\}\leq c\log n$ for some $c=c(k)$. Fix $n\in\N$. For $m\in \N$, let $B_m=\{x\in B\cap[n]:f(x)=m\}$. We have shown that $B\cap [n]=\bigcup_{m\leq c\log n}B_m$. By definition of $f$, $|B_m|\leq (3m)^k$ for any $m\in\N$. Altogether 
\[
|B\cap [n]|\leq\sum_{m\leq c\log n}(3m)^k\leq (c\log n)(3c\log n)^k=3^kc^{k+1}(\log n)^{k+1}.
\]
Altogether, we have  $d^*(B)=0$, as desired.
\end{proof}

For the rest of the section, we fix $A\seq\Z$, which is independently sparse witnessed by $B=(b_n)_{n=0}^\infty\seq\N$, $(\lambda_n)_{n=0}^\infty\seq\R^+$, and $F\seq\Z$. We assume $A=(a_n)_{n=0}^\infty$ is infinite, and thus we can construct a function $f\colon \N\to\N$ such that, for each $n\in\N$, $a_n=b_{f(n)}+r_n$ for some $r_n\in F$. By enlarging $F$, we may assume without loss of generality that $f$ is \emph{weakly increasing}, i.e. $m\leq n$ implies $f(m)\leq f(n)$. Given $k\geq 1$ and $\nbar\in\N^k$, let $\cP(\nbar)$ be the partition of $[k]$ induced by the equivalence relation $f(n_i)=f(n_j)$. 

\begin{definition}
Given $k\geq 1$, $\cbar\in\{\nv 1,1\}^k$, and $r\in\Z$, define
\[
A(\cbar,r)=\{\nbar\in\N^k:c_1a_{n_1}+\ldots+c_ka_{n_k}=r\}.
\]
Define $A_0(\cbar,r)=\left\{\nbar\in A(\cbar,r):\sum_{i\in P}c_i\neq 0\text{ for all }P\in\cP(\nbar)\right\}$.
\end{definition}

Let $\cN^1_{\ms}$ denote the expansion of $\cN_{1,\ms}=(\N,x\mapsto x+1)$ by unary predicates for all subsets of $\N$. Our goal is to interpret $A^{\cZ}_0$ in $\cN^1_{\ms}$, and the next lemma shows that, using the successor function and arbitrary unary predicates, we may reduce to sets of the form $A_0(\cbar,r)$. The proof is elementary but technical. 

\begin{lemma}\label{lem:IStech}
Suppose $A_0(\cbar,r)$ is definable in $\cN^1_{\ms}$ for all $k\geq 1$, $\cbar\in\{\nv 1,1\}^k$, and $r\in\Z$. Then $A(\cbar,r)$ is definable in $\cN^1_{\ms}$ for all $k\geq 1$, $\cbar\in\{\nv 1,1\}^k$, and $r\in\Z$.
\end{lemma}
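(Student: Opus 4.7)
The plan is to decompose $A(\cbar,r)$ into finitely many definable pieces indexed by a pair $(\cP,\bar\rho)$, where $\cP$ is a partition of $[k]$ (playing the role of $\cP(\nbar)$) and $\bar\rho=(\rho_1,\ldots,\rho_k)\in F^k$ prescribes the offsets $r_{n_i}$. Fix such a pair, and call a block $P\in\cP$ \emph{trivial} if $\sum_{i\in P}c_i=0$. On any tuple $\nbar$ with $\cP(\nbar)=\cP$ and $r_{n_i}=\rho_i$, the value $b_{f(n_i)}$ is constant on each block of $\cP$, so in a trivial block the contribution $\sum_{i\in P}c_ia_{n_i}$ collapses to the constant $\sum_{i\in P}c_i\rho_i$. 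Letting $Q\seq[k]$ be the union of the non-trivial blocks and setting $r':=r-\sum_{P\text{ trivial}}\sum_{i\in P}c_i\rho_i$, the equation $\sum_{i=1}^kc_ia_{n_i}=r$ reduces on such tuples to $\sum_{i\in Q}c_ia_{n_i}=r'$; moreover $\cP(\nbar|_Q)=\cP|_Q$ has only non-trivial blocks, so this is precisely the condition $\nbar|_Q\in A_0((c_i)_{i\in Q},r')$.

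To realize this plan inside $\cN^1_{\ms}$, the key technical step is that the binary relation $\sim_f$ on $\N$ given by $x\sim_f y \Leftrightarrow f(x)=f(y)$ is definable. Since $f$ is weakly increasing and each fiber $f^{-1}(m)$ contains at most $|F|$ elements (distinct points of a fiber give distinct $r_n\in F$), the fibers are consecutive intervals of length at most $m:=|F|$. For each $0\leq j<m$, the set $E_j:=\{x\in\N:f(x)=f(x+j)\}$ is a unary predicate, hence available in $\cN^1_{\ms}$, and $x\sim_f y$ holds iff for some such $j$ either $y=\ms^j(x)\wedge x\in E_j$ or $x=\ms^j(y)\wedge y\in E_j$. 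Similarly, for each $\rho\in F$ the unary predicate $R_\rho:=\{n:r_n=\rho\}$ is available. Consequently, the set $D(\cP,\bar\rho)$ of tuples $\nbar\in\N^k$ satisfying $\cP(\nbar)=\cP$ and $r_{n_i}=\rho_i$ for each $i$ is definable in $\cN^1_{\ms}$.

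Combining these ingredients,
\[
A(\cbar,r)=\bigcup_{\cP,\bar\rho}\bigl\{\nbar\in D(\cP,\bar\rho):\nbar|_Q\in A_0((c_i)_{i\in Q},r')\bigr\},
\]
where the union is finite (over partitions $\cP$ of $[k]$ and tuples $\bar\rho\in F^k$) and $Q,r'$ depend only on $\cP,\bar\rho,\cbar,r$. By hypothesis each set $A_0((c_i)_{i\in Q},r')$ is definable in $\cN^1_{\ms}$, and pulling back along the projection $\nbar\mapsto\nbar|_Q$ is trivial because the omitted coordinates simply do not appear in the defining formula. Hence every piece of the union is definable in $\cN^1_{\ms}$, and so is $A(\cbar,r)$.

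The one substantive step is the definability of $\sim_f$; once that and the predicates $R_\rho$ are in hand, the rest is combinatorial bookkeeping around which blocks of $\cP$ are trivial. I expect no further obstacle, since the boundedness of the fibers of $f$ (forced by $F$ being finite) together with the monotonicity of $f$ is precisely what makes a binary equivalence relation encodable using successor and unary predicates alone.
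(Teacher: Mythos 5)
Your proof is correct, and the overall strategy matches the paper's: decompose $A(\cbar,r)$ into finitely many pieces indexed by combinatorial data (a partition of $[k]$ together with offsets in $F$), observe that each trivial block contributes a constant determined by that data, and reduce the non-trivial part to the assumed definability of $A_0$. The bookkeeping is organized differently. The paper indexes by $(\cP,\sigma)$ with $\sigma\in F^I$ prescribing offsets only on the trivial coordinates, and never imposes $\cP(\nbar)=\cP$ on the whole tuple; instead it ``folds'' each trivial block onto a base variable $n_P$ together with shifts $\ubar_P\in[-K,K]^P$, encoding the shared-fiber constraint via unary predicates $U(P,\ubar_P)$ and the successor function. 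You index by $(\cP,\bar\rho)$ with $\bar\rho\in F^k$, make the fiber-equivalence relation $\sim_f$ explicitly definable from the unary predicates $E_j$ and successor, and then require $\cP(\nbar)=\cP$ exactly. Both routes rest on the same key fact, namely that the fibers of $f$ are intervals of uniformly bounded length, and your write-up has the merit of actually justifying this bound (distinct indices in a fiber give distinct offsets in $F$ because $A$ is strictly increasing), a point the paper simply posits via the constant $K$. Isolating the definability of $\sim_f$ as the one substantive step is a clean way to package the argument, and the rest is, as you say, combinatorial bookkeeping equivalent to the paper's.
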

\begin{proof}
Assume $A_0(\cbar,r)$ is definable in $\cN^1_{\ms}$ for all $k\geq 1$, $\cbar\in\{\nv 1,1\}^k$, and $r\in\Z$. We may fix an integer $K\geq 0$ such that, for all $m,n\in\N$, if $f(m)=f(n)$ then $|m-n|\leq K$. We use $[\nv K,K]$ for the interval of integers from $\nv K$ to $K$. Fix $k\geq 1$, $\cbar\in\{\nv 1,1\}^k$, and $r\in\Z$.

Let $\P$ be the set of partitions of $[k]$. Fix $\cP\in\P$. Then $\cP$ determines the following objects (which depend only on initial choice of $\cP$). Given $P\in\cP$, let $c_P=\sum_{i\in P}c_i$. Let $\cQ=\{P\in\cP:c_P=0\}$, $E=\prod_{P\in\cQ}[\nv K,K]^P$, and $I=\bigcup \cQ$. Let $\Sigma(\cP)=F^I$. 

For a fixed $\cP\in\P$ and $\sigma=(s_i)_{i\in I}\in\Sigma(\cP)$, we define a set $X(\cP,\sigma)$ as follows. Identify elements of $E$ as $\ku=(\ubar_P)_{P\in\cQ}$, where $\ubar_P\in [\nv K,K]^P$ for all $P\in\cQ$. Identify elements of $\N^{\cQ}$ as $\kn=(n_P)_{P\in\cQ}$.  Given $\ku\in E$ and $\kn\in\N^{\cQ}$, define $\ku\oplus\kn$ to be the tuple $\nbar\in\N^I$ such that for $P\in\cQ$ and $i\in P$, $n_i=n_P+u_{P,i}$. Given $P\in\cQ$ and $\ubar\in[\nv K,K]^P$, define
\[
U(P,\ubar)=\left\{n\in\N:\text{\begin{tabular}{l} $f(n+u_i)=f(n+u_i)$ for all $i,j\in P$, and\\ $a_{n+u_i}=b_{f(n+u_i)}+s_i$ for all $i\in P$.\end{tabular}}\right\}
\]
Given $P\in\cQ$, set $s_P=\sum_{i\in P}c_is_i$. Let $s=r-\sum_{P\in\cQ}s_P$. Note that, by construction, we may view $A_0((c_i)_{i\not\in I},s)$ as a subset of $\N^{[k]\backslash I}$. Given $\mbar\in \N^I$ and $\nbar\in\N^{[k]\backslash I}$, let $\nbar\otimes\mbar$ be the tuple $\vbar\in\N^k$ such that $v_i=m_i$ if $i\in I$ and $v_i=n_i$ if $i\not\in I$. Finally, we define
\[
X(\cP,\sigma)=\bigcup_{\ku\in E}\left\{\nbar_*\otimes (\ku\oplus\kn):\nbar_*\in A_0((c_i)_{i\not\in I},s),~\kn\in\prod_{P\in\cQ}U(P,\ubar_P)\right\}.
\]
Given a fixed $\ku\in E$, the set $\{\ku\oplus\kn:\kn\in\prod_{P\in \cQ}U(P,\ubar_P)\}$ is definable in $\cN^1_{\ms}$ using the successor function and unary predicates. By assumption, $A_0((c_i)_{i\not\in I},s)$ is definable in $\cN^1_{\ms}$. Therefore, since $E$ is finite, $X(\cP,\sigma)$ is definable in $\cN^1_{\ms}$. 

Let $\Delta=\{(\cP,\sigma):\cP\in\P,~\sigma\in \Sigma(\cP)\}$, and note that $\Delta$ is finite. To finish the proof, we show
\[
A(\cbar,r)=\bigcup_{(\cP,\sigma)\in\Delta}X(\cP,\sigma).
\]
For the right-to-left direction, fix $(\cP,\sigma)\in\Delta$, $\ku\in E$, $\nbar_*\in A_0((c_i)_{i\not\in I},s)$, and $\kn\in\prod_{P\in\cQ}U(P,\ubar_P)$. We show $\nbar_*\otimes(\ku\oplus\kn)\in A(\cbar,r)$.  Write the tuple $\nbar_*\otimes(\ku\oplus\kn)$ as $\nbar\in\N^k$ where, for $i\not\in I$ we have $n_i=n_{*,i}$ and, for $i\in P\in\cQ$, $n_i=n_P+u_{P,i}$. For each $P\in\cQ$, since $n_P\in U(P,\ubar_P)$, we may let $m_P$ be the common value of $f(n_P+u_{P,i})$ for $i\in P$. In particular, $a_{n_P+u_{P,i}}=b_{m_P}+s_i$ for all $i\in P\in\cQ$. Therefore,
\begin{multline*}
\sum_{i\in [k]}c_ia_{n_i} = \sum_{i\not\in I}c_ia_{n_i}+\sum_{i\in I}c_ia_{n_i}=s+\sum_{P\in\cQ}\sum_{i\in P}c_ia_{n_P+u_{P,i}} =s+\sum_{P\in\cQ}\sum_{i\in P}c_i(b_{m_P}+s_i)\\
=s+\sum_{P\in\cQ}\left(c_Pb_{m_P}+\sum_{i\in P}c_is_i\right)=s+\sum_{P\in\cQ}s_P=r.
\end{multline*}
So $\nbar\in A(\cbar,r)$, as desired. For the left-to-right containment, fix $\nbar\in A(\cbar,r)$. Let $\cP=\cP(\nbar)$ and $\sigma=(r_{n_i})_{i\in I(\cP)}$. We want to show $\nbar\in X(\cP,\sigma)$. Let $\nbar_*=(n_i)_{i\not\in I}$. For each $P\in\cQ$, fix some $i_P\in P$ and let $\kn=(n_{i_P})_{P\in\cP}$. Then, for each $P\in \cQ$, let $\ubar_P=(n_i-n_{i_P})_{i\in P}$. By construction, $n_{i_P}\in U(P,\ubar_P)$ for all $P\in\cQ$. By choice of $K$, $\ku\in E$. By construction, $\nbar=\nbar_*\otimes(\ku\oplus\kn)$. So it remains to verify $\nbar_*\in A_0((c_i)_{i\not\in I},s)$ (where recall $s=r-\sum_{P\in\cQ}\sum_{i\in P}r_{n_i}$). Similar to the equations above, we have $\sum_{i\in I}c_ia_{n_i}=\sum_{P\in\cQ}\sum_{i\in P}r_{n_i}$, and so $\nbar_*\in A_0((c_i)_{i\not\in I},s)$ since $\nbar\in A(\cbar,r)$.
\end{proof}

\begin{lemma}\label{lem:ISfin}
$A_0(\cbar,r)$ is finite for any $k\geq 1$, $\cbar\in\{\nv 1,1\}^k$, and $r\in\Z$.
\end{lemma}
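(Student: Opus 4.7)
The plan is to show that $M(\nbar) := \max_i f(n_i)$, and hence $\max_i n_i$, is uniformly bounded as $\nbar$ ranges over $A_0(\cbar,r)$. Substituting $a_n = b_{f(n)} + r_n$ and $b_m = \lambda_m + \theta_m$ (with $\theta_m := b_m - \lambda_m$) into $c_1 a_{n_1} + \ldots + c_k a_{n_k} = r$, and grouping the $\lambda$-terms by the partition $\cP = \cP(\nbar)$ of $[k]$, yields
\[
\sum_{P \in \cP} c_P \lambda_{m_P} = r - \sum_{i} c_i r_{n_i} - \sum_{P \in \cP} c_P \theta_{m_P},
\]
where $m_P$ is the common value of $f(n_i)$ for $i \in P$ and $c_P = \sum_{i \in P} c_i$. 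The defining feature of $A_0$ is that every $c_P$ is nonzero.

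The main step is to lower bound the left-hand side via Fact \ref{fact:Poon}, applied to $S = \{\lambda_n : n \in \N\}$ (which has closed discrete ratio set since $(\lambda_n)$ is geometric). The values $m_P$ are pairwise distinct by construction, and since each $c_P \neq 0$ I would expand each term $c_P \lambda_{m_P}$ as $|c_P|$ copies of $\mathrm{sgn}(c_P)\lambda_{m_P}$, obtaining at most $k$ summands with coefficients in $\{\nv 1, 1\}$. Within each expansion block the copies share a common sign, so $\Q$-linear independence of $(\lambda_n)$ forces every nonempty sub-sum of the expanded tuple to be nonzero; this is exactly the hypothesis needed to invoke Fact \ref{fact:Poon}, producing a constant $\epsilon = \epsilon(k) > 0$ with
\[
\Bigl|\sum_{P \in \cP} c_P \lambda_{m_P}\Bigr| \geq \epsilon\, \lambda_{M(\nbar)}.
\]

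On the right-hand side, the first two terms are bounded by a constant depending only on $k$, $r$, and $F$, while the $\theta$-contribution is controlled by the hypothesis that $|\theta_n|$ is $o(\lambda_n)$: for any $\delta > 0$, $\max_P |\theta_{m_P}| \leq \delta \lambda_{M(\nbar)} + C(\delta)$ for some constant $C(\delta)$. Choosing $\delta$ with $k\delta < \epsilon/2$ forces $\lambda_{M(\nbar)}$ to be uniformly bounded, and since $A$ is infinite and $F$ is finite, $f$ must be unbounded — combined with $f$ being weakly increasing, this ensures that each preimage $\{n : f(n) \leq M\}$ is finite, giving a uniform bound on $\max_i n_i$ and hence finiteness of $A_0(\cbar,r)$. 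The main obstacle I foresee is verifying the hypothesis of Fact \ref{fact:Poon}: one cannot apply it directly to the original equation because the $c_i$'s within a single part may cancel, but after collapsing to $\cP$ and re-expanding, the $A_0$-condition $c_P \neq 0$ is precisely what rules out such cancellation.
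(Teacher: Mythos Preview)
Your proposal is correct and follows essentially the same approach as the paper: decompose $\sum_i c_i a_{n_i}$ into the $\lambda$-part grouped by $\cP(\nbar)$, lower bound it via Fact~\ref{fact:Poon} using that each $c_P\neq 0$ and the $m_P$'s are distinct, and upper bound the remainder using $|\theta_m|=o(\lambda_m)$ and $|r_{n_i}|\leq\max|F|$. The only cosmetic difference is how the non-unit coefficients $c_P$ are handled when invoking Fact~\ref{fact:Poon}: you expand each $c_P\lambda_{m_P}$ into $|c_P|$ signed copies and work with $S=\{\lambda_n\}$, whereas the paper absorbs the coefficients into an enlarged set $S=\{|c|\lambda_n:c\in C\setminus\{0\}\}$ and argues by contradiction via the contrapositive of Fact~\ref{fact:Poon}; both routes use $\Q$-linear independence at the same logical point.
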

\begin{proof}
Fix $k\geq 1$, $\cbar\in\{\nv 1,1\}^k$, and $r\in\Z$. Define  $C=\{\sum_{i\in I}c_i:I\seq [k]\}$ and $S=\{|c|\lambda_n:n\in\N,~c\in C\backslash\{0\}\}$. Since $C$ is finite we still have that the set $\{\frac{s}{t}:s,t\in S,~t\leq s\}$ is closed and discrete. By Fact \ref{fact:Poon}, we may fix $\epsilon>0$ such that, for any $1\leq l\leq k$, $\nbar\in\N^l$, and $c'_1,\ldots,c'_l\in C$, if $\sum_{i\in I}c'_i\lambda_{n_i}\neq 0$ for all nonempty $I\seq[l]$, then $|c'_1\lambda_{n_1}+\ldots+c'_l\lambda_{n_l}|\geq \epsilon\lambda_{\max\nbar}$.  Let $R=\max\{|x|:x\in F\}$.

For a contradiction, suppose we have an infinite sequence $(\nbar(t))_{t=0}^\infty$ in $A_0(\cbar,r)$. After passing to a subsequence and permuting indices, we may assume without loss of generality that $f(n(t)_1)\leq\ldots\leq f(n(t)_k)$ for all $t\in\N$. In particular, $\sup_{t\in\N}f(n(t)_k)=\infty$. For $m\in\N$, let $\theta_m=b_m-\lambda_m$. Since $|\theta_m|$ is $o(\lambda_m)$, there is $M_0>0$ such that $|\theta_m|\leq \frac{\epsilon}{2k}\lambda_m$ for all $m\geq M_0$. Define
\[
d=\max\{|c_1\theta_{m_1}+\ldots+c_l\theta_{m_l}|:l<k,~m_i<M_0\}.
\]
We may choose $M\geq M_0$ such that $r+kR+d<\frac{\epsilon}{2}\lambda_M$. Choose $t\in\N$ such that $f(n(t)_k)\geq M$ and, for $1\leq i\leq k$, let $m_i=f(n(t)_i)$. Let $\cP=\cP(\nbar(t))$. For each $P\in\cP$, let $m_P$ be the common value of $m_i$ for $i\in P$, and let $c_P=\sum_{i\in P}c_i$. Then $m_P\neq m_{P'}$ for distinct $P,P'\in\cP$, and $c_P\in C\backslash\{0\}$ for all $P\in\cP$ since $\nbar\in A_0(\cbar,r)$. Let $\mbar=(m_P)_{P\in\cP}$ and $\Lambda=\sum_{P\in \cP}c_P\lambda_{m_P}$. Let $Q\in\cP$ be the unique set containing $k$. By construction, $m_Q=\max\mbar\geq  M$. Since $\nbar(t)\in A_0(\cbar,r)$, we have
\[
|\Lambda|=|r-(c_1(r_{n_1}+\theta_{m_1})+\ldots+c_k(r_{n_k}+\theta_{m_k}))|\leq r+kR+d+\textstyle\frac{\epsilon}{2}\lambda_{m_Q}<\epsilon\lambda_{m_Q}.
\]
By choice of $\epsilon$, it follows that $\sum_{P\in X}c_P\lambda_{m_P}=0$ for some nonempty $X\seq\cP$. By $\Q$-linear independence, $c_P=0$ for all $P\in X$, which is a contradiction.
\end{proof}

We can now prove the main result of this section.

\begin{proof}[Proof of Theorem \ref{thm:IS}]
By Lemmas \ref{lem:IStech} and \ref{lem:ISfin}, $A^{\cZ}_0$ is a virtual reduct of $\cN^1_{\ms}$, which is monadically stable of $U$-rank $1$ by Proposition \ref{prop:monstab}. Now apply Proposition \ref{prop:ISss} and Corollary \ref{cor:mainstab}.
\end{proof}

\begin{remark}
The independently sparse sets described  in Example \ref{ex:IS} are all of the form $A=(a_n)_{n=0}^\infty$ such that $|a_n-\lambda_n|$ is $o(\lambda_n)$ for some $\Q$-linearly independent geometric sequence $(\lambda_n)_{n=0}^\infty$. So in this case, we may use $F=\{0\}$ and $f(x)=x$ in the above analysis. It follows that $A_0(\cbar,r)=A(\cbar,r)$ for any $k\geq 1$, $\cbar\in\{\nv1,1\}^k$, and $r\in\Z$. By Lemma \ref{lem:ISfin}, $A^{\cZ}_0$ is interdefinable, via $n\mapsto a_n$, with $\N$ in the language of equality, and thus $A^{\cZ}$ is interdefinable, via $n\mapsto a_n$, with an expansion of $\N$ by unary predicates. 
\end{remark}

\bibliography{/Users/Gabe/Desktop/Math/BibTex/biblio}
\bibliographystyle{amsplain}
\end{document}